\documentclass{amsart}
\usepackage{ucs}
\usepackage[utf8x]{inputenc}
\usepackage{verbatim}
\usepackage{paralist}
\usepackage{leftidx}
\usepackage{dsfont}
\usepackage{amsthm}
\usepackage{amsmath}
\usepackage{amssymb}
\usepackage{amscd}
\usepackage{graphicx}
\usepackage{setspace}
\usepackage[all]{xy}
\usepackage{mathrsfs} 
\usepackage{hyperref}
\title{On arithmetic intersection numbers on self-products of curves}
\author{Robert Wilms}
\address{Robert Wilms\\
	Department of Mathematics and Computer Science\\
	University of Basel\\
	Spiegelgasse 1\\
	4051 Basel\\
	Switzerland}
\email{robert.wilms@unibas.ch}
\thanks{The author gratefully acknowledges support from SFB/Transregio 45.}
\subjclass[2010]{14G40.}
\begin{document}
	\numberwithin{equation}{section}
	\newtheorem{Def}{Definition}
	\numberwithin{Def}{section}
	\newtheorem{Rem}[Def]{Remark}
	\newtheorem{Lem}[Def]{Lemma}
	\newtheorem{Que}[Def]{Question}
	\newtheorem{Cor}[Def]{Corollary}
	\newtheorem{Exam}[Def]{Example}
	\newtheorem{Thm}[Def]{Theorem}
	\newtheorem*{clm}{Claim}
	\newtheorem{Pro}[Def]{Proposition}
	\newcommand\gf[2]{\genfrac{}{}{0pt}{}{#1}{#2}}

\begin{abstract}
We give a closed formula for the N\'eron--Tate height of tautological integral cycles on Jacobians of curves over number fields as well as a new lower bound for the arithmetic self-intersection number $\hat{\omega}^2$ of the dualizing sheaf of a curve in terms of Zhang's invariant $\varphi$. As an application, we obtain an effective Bogomolov-type result for the tautological cycles. We deduce these results from a more general combinatorial computation of arithmetic intersection numbers of adelic line bundles on higher self-products of curves, which are linear combinations of pullbacks of line bundles on the curve and the diagonal bundle. 
\end{abstract}
\maketitle
\section{Introduction}
In \cite{Wil17} we introduced a combinatorial method to compute Deligne pairings on self-products of universal families of Riemann surfaces. De Jong \cite{dJo18} applied this technique to compute N\'eron--Tate heights of certain tautological cycles on Jacobians of curves over number fields. The aim of this paper is to establish an improvement of this method, which makes it possible to deduce a closed formula for the N\'eron--Tate height of the tautological cycles and in particular, to obtain an effective Bogomolov-type result for these cycles.

To give more precise statements, let $X$ be a smooth projective geometrically connected curve of genus $g\ge 2$ over a number field $K$ of degree $d_K=[K:\mathbb{Q}]$ with semistable reduction over the integers $\mathcal{O}_K$ of $K$. Further, let $J=\mathrm{Pic}^0(X)$ be the Jacobian of $X$ and $\mathcal{L}$ an ample symmetric line bundle on $J$, which induces the canonical principal polarization of $J$ and which is rigidified at the origin.

For any divisor $\alpha$ on $X$ of degree $1$ and any vector $m=(m_1,\dots,m_r)\in\left(\mathbb{Z}\setminus\lbrace 0\rbrace\right)^r$ of non-zero integers we define the map
\begin{align}\label{mapf}
f_{m,\alpha}\colon X^r\to J,\quad (x_1,\dots,x_r)\mapsto \sum_{j=1}^r m_j(x_j-\alpha)
\end{align}
and we denote by $Z_{m,\alpha}$ the cycle on $J$ obtained by the image of $f_{m,\alpha}$. Philippon \cite{Phi91} and Gubler \cite[(8.7)]{Gub94} introduced the N\'eron--Tate height for higher-dimensional subvarieties of abelian varieties. If $r\le g$, the N\'eron--Tate height of $Z_{m,\alpha}$ with respect to $\mathcal{L}$ is given by
$$h'_{\mathcal{L}}(Z_{m,\alpha})=\frac{\langle \hat{\mathcal{L}}^{r+1}|Z_{m,\alpha}\rangle}{d_K(r+1)\langle \mathcal{L}^r|Z_{m,\alpha}\rangle},$$
where the terms in the brackets denote the (arithmetic) self-intersection numbers of $\mathcal{L}$ equipped with its admissible adelic metric and restricted to $Z_{m,\alpha}$. 

De Jong \cite[Theorem~1.1]{dJo18} showed, that there are rational numbers $a'$, $b'$ and $c'$, such that
$$h'_{\mathcal{L}}(Z_{m,\alpha})=\frac{1}{d_K}\left(a'\cdot\hat{\omega}^2+b'\cdot\varphi(X)+c'\cdot h_{NT}(x_{\alpha})\right),$$
where $\hat{\omega}^2$ denotes the arithmetic self-intersection of the dualizing sheaf $\omega$ of $X$ equipped with its admissible adelic metric, we shortly write $x_{\alpha}=\alpha-\frac{\omega}{2g-2}$ and $h_{NT}(x_{\alpha})$ is the N\'eron--Tate height, which can also be expressed by the arithmetic self-intersection number $h(x_{\alpha})=-\langle\hat{x}_{\alpha},\hat{x}_{\alpha}\rangle$ equipping $x_{\alpha}$ with its admissible adelic metric, see \cite[(5.4)]{Zha93}.
The invariant $\varphi(X)$ is given by
$$\varphi(X)=\sum_{v\in M(K)_0}\varphi(X_v)\log N(v)+\sum_{\sigma\colon K\to \mathbb{C}} \varphi(X_\sigma),$$
where $M(K)_0$ denotes the set of finite places of $K$, $N(v)$ is the cardinality of the residue field at $v$, $X_v$ is a semistable regular model of $X$ over the integers $\mathcal{O}_{K_v}$ of the completion $K_v$ of $K$ at $v$ and $\varphi(X_v)$ is an invariant only depending on the metrized reduction graph of $X_v$ introduced by Zhang \cite[Theorem~1.3.1]{Zha10}. The second sum runs over all embeddings $\sigma\colon K\to \mathbb{C}$, $X_\sigma$ denotes the base change of $X$ induced by $\sigma$ and $\varphi(X_\sigma)$ is an invariant of compact connected Riemann surfaces also introduced in {\em loc.\ cit.}

De Jong computed $a'$, $b'$ and $c'$ in some special cases and gave an algorithm to compute them in general, which was implemented by D. Holmes in SAGE. As an application, de Jong obtained an effective Bogomolov-type result for $Z_{(1),\alpha}$. Further, he remarked, that by the results of the algorithms, a general closed expression for $a'$, $b'$ and $c'$ seems not to be straightforward.

In this paper, we will modify and generalize our combinatorial method in \cite[Section~5.3]{Wil17} to compute arithmetic intersection products on self-products of $X$ in a more general way. Before we give the result in its full generality, we discuss some applications. First, we obtain the following closed expressions for the numbers $a'$, $b'$ and $c'$ above.
\begin{Thm}\label{neron-tate-height}
	Let $X$ be any smooth projective geometrically connected curve of genus $g\ge2$ with semistable reduction over a number field $K$, $\alpha\in \mathrm{Div}^1(X)$ any degree $1$ divisor on $X$ and $m\in (\mathbb{Z}\setminus\lbrace 0\rbrace )^r$ any vector of non-zero integers. If $r=g$, it holds $h'_{\mathcal{L}}(Z_{m,\alpha})=0$. For $1\le r<g$ we have
	$$h'_{\mathcal{L}}(Z_{m,\alpha})=\frac{g-r}{2d_K}\left(a\cdot\hat{\omega}^2+b\cdot\varphi(X)+c\cdot h_{NT}(x_{\alpha})\right)$$
	with $a=m_1^2/(4(g-1)^2)$, $b=0$ and $c=m_1^2/g$ if $r=1$ and
	$$a=\frac{\sum_{j=1}^rm_j^2}{4(g-1)^2}-\frac{(2g+1)\sum_{j<k}^rm_jm_k}{6g(g-1)^2(g-2)},\quad b=\frac{\sum_{j<k}^rm_jm_k}{3g(g-1)(g-2)},\quad c=\frac{\left(\sum_{j=1}^rm_j\right)^2}{g}$$
	if $r\ge 2$.
\end{Thm}

Secondly, we discuss lower bounds for $\hat{\omega}^2$. Zhang \cite[Section~1.4]{Zha10} has obtained, that the lower bound
\begin{align}\label{zhangbound}
\hat{\omega}^2\ge \frac{2g-2}{2g+1}\varphi(X)
\end{align}
would follow from the arithmetic Hodge index conjecture by Gillet-Soul\'e \cite[Conjecture~2]{GS94}, which is not known to be true in general. But for hyperelliptic curves it is known, that the inequality (\ref{zhangbound}) becomes an equality, which shows the sharpness of this conjectured bound.
De Jong obtained the lower bound
\begin{align}\label{dejongbound}
\hat{\omega}^2\ge \frac{2}{3g-1}\varphi(X)
\end{align}
as a consequence of the non-negativity of the N\'eron--Tate height of $Z_{m,\alpha}$, see \cite[Corollary~1.4]{dJo18}, and he remarked, that experiments led to the belief, that this is the best lower bound for $\hat{\omega}^2$ obtainable by this method. Theorem \ref{neron-tate-height} shows, that this is indeed true: We get the best lower bound for $\hat{\omega}^2$ exactly if $\alpha=\frac{\omega}{2g-2}$ and the quotient 
$$\frac{\sum_{j<k}^r m_jm_k}{\sum_{j=1}^r m_j^2}$$
is minimal. This is exactly the case if $\sum_{j=1}^r m_j=0$, which gives the bound (\ref{dejongbound}).
In this paper, we will give stronger bounds as an application of the arithmetic Hodge index theorem for adelic line bundles by Yuan and Zhang \cite[Theorem~3.2]{YZ17}.
\begin{Thm}\label{lowerbound}
	Let $X$ be any smooth projective geometrically connected curve of genus $g\ge2$ with semistable reduction over a number field $K$. It holds
	$$\hat{\omega}^2\ge \frac{g-1}{2g+1}\varphi(X).$$
	For $g=3$ we even have $\hat{\omega}^2\ge \frac{1}{3}\varphi(X)$ and for $g=4$ we have $\hat{\omega}^2\ge \frac{38}{109}\varphi(X)$.
\end{Thm}
It is hard to predict, whether our method can give even stronger bounds.

N\'eron--Tate heights have applications to the study of the generalized Bogomolov conjecture.
We first discuss this in a more general situation. Let $A$ be an abelian variety defined over $K$, $\mathcal{L}$ a symmetric ample line bundle on $A$ defining a principal polarization, $h'_{\mathcal{L}}(\cdot)$ the Néron-Tate height associated to $\mathcal{L}$ and $Z\subseteq A$ a closed subvariety.
Then Zhang \cite[Theorem~1.10]{Zha95} has shown, that the first essential minimum 
$$e'_{\mathcal{L}}(Z)=\sup_{\gf{Y\subset Z}{\mathrm{codim}(Y)=1}} \inf_{x\in (Z\setminus Y)(\overline{K})} h'_{\mathcal{L}}(x)$$
of $Z$ is bounded by
$e'_{\mathcal{L}}(Z)\ge h'_{\mathcal{L}}(Z)$.

The generalized Bogomolov conjecture states, that $e'_{\mathcal{L}}(Z)>0$ if $Z$ is not the translate by a torsion point of an abelian subvariety of $A$. This was first proven by Ullmo \cite{Ull98} if $Z$ is a curve embedded in its Jacobian $A=\mathrm{Pic}^0(Z)$ and then by Zhang \cite{Zha98} in the general case. The effective Bogomolov conjecture asks for an effective positive lower bound for $e'_{\mathcal{L}}(Z)$.
As a direct consequence of Theorems \ref{neron-tate-height} and \ref{lowerbound} we can deduce a lower bound for $e'_{\mathcal{L}}(Z_{m,\alpha})$.
\begin{Cor}	Let $X$ be any smooth projective geometrically connected curve of genus $g\ge2$ with semistable reduction over a number field $K$, $\alpha\in \mathrm{Div}^1(X)$ any degree $1$ divisor on $X$ and $m\in (\mathbb{Z}\setminus\lbrace 0\rbrace )^r$ any vector of non-zero integers.
	If $r=1$, it holds $h'_{\mathcal{L}}(Z_{m,\alpha})\ge \frac{m_1^2}{8d_K(2g+1)}\varphi(X)>0$. For $2\le r< g$ we have
	$$h'_{\mathcal{L}}(Z_{m,\alpha})\ge (g-r)\frac{(3g^2-8g-1)\sum^r_{j=1}m_j^2+(2g+1)\left(\sum_{j=1}^r m_j\right)^2}{24g(g-1)(g-2)(2g+1)d_K}\varphi(X)>0.$$
\end{Cor}
Thus, we obtain an effective Bogomolov-type result for all $Z_{m,\alpha}$ if $r<g$.
Note, that we indeed have $\varphi(X)>0$: For any embedding $\sigma\colon K\to \mathbb{C}$ we obtain the positivity $\varphi(X_\sigma)>0$ from \cite[2.5~Remark~1]{Zha10} and for $v\in M(K)_0$ it was shown by Cinkir \cite[Theorem~2.11]{Cin11}, that we have $\varphi(X_v)\ge 0$. More precisely, Cinkir proved that we have
$$\varphi(X_v)\ge \frac{g-1}{2g(7g+5)}\delta_0(X_v)+\sum_{j=1}^{\lfloor g/2\rfloor}\frac{2j(g-j)}{g}\delta_j(X_v),$$
where $\delta_0(X_v)$ denotes the number of non-separating geometric double points on the reduction of $X$ at $v$ and for $1\le j\le \lfloor g/2\rfloor$ we write $\delta_j(X_v)$ for the number of geometric double points on the reduction of $X$ at $v$ such that the local normalization has two connected components, one of arithmetic genus $j$ and one of arithmetic genus $g-j$.
Hence, we obtain a very explicit lower bound for $e'_{\mathcal{L}}(Z_{m,\alpha})$.

Next, we give our general statement on the arithmetic intersection numbers on $X^r$.
Let $\hat{\Delta}$ be the admissible adelic line bundle on $X^2$ associated to the diagonal in $X^2$. We define the following modified versions of $\hat{\omega}$ and $\hat{\Delta}$
\begin{align}\label{modification}
\hat{\omega}^{\alpha}&=\hat{\omega}+2\hat{\alpha}-\pi^*\hat{\alpha}^2\\
\hat{\Delta}^{\alpha}&=\hat{\Delta}-p_1^*\hat{\alpha}-p_2^*\hat{\alpha}+\pi_2^*\hat{\alpha}^2.\nonumber
\end{align}
Here, $\hat{\alpha}$ is equipped with its admissible adelic metric, $\hat{\alpha}^2$ denotes its arithmetic self-intersection number, we write $\pi\colon X\to \mathrm{Spec}~K$ and $\pi_2\colon X^2\to \mathrm{Spec}~K$ for the structure morphisms and we write in general $p_{j_1\dots j_l}\colon X^r\to X^l$ for the projection to the $j_1$-th,$\dots$, $j_l$-th factors. Further, we denote the following adelic line bundles on $X^r$
\begin{align}\label{deltajk}
\hat{\Delta}^{\alpha}_{jk}=\begin{cases} -p_j^*\hat{\omega}^{\alpha} & \text{if } j=k,\\ p_{jk}^*\hat{\Delta}^{\alpha}& \text{if }j\neq k .\end{cases}
\end{align}

Our main result computes the arithmetic intersection numbers of adelic $\mathbb{Q}$-line bundles given by linear combinations of the $\hat{\Delta}^{\alpha}_{jk}$'s in terms of the self-intersection number $\hat{\omega}^2$, Zhang's invariant $\varphi(X)$ and the N\'eron--Tate height of $x_{\alpha}$.

\begin{Thm}\label{mainthm}
	Let $X$ be any smooth projective geometrically connected curve of genus $g\ge2$ with semistable reduction over a number field $K$, $r>0$ an integer and $\overline{\mathcal{M}}_1,\dots \overline{\mathcal{M}}_{r+1}$ adelic $\mathbb{Q}$-line bundles on $X^r$ of the form
	$$\overline{\mathcal{M}}_l=\tfrac{1}{2}\sum_{j,k=1}^r t_{l,j,k}\hat{\Delta}^{\alpha}_{jk},$$
	where $t_{l,j,k}\in\mathbb{Q}$ are any rational numbers satisfying $t_{l,j,k}=t_{l,k,j}$ for all $l,j,k$. 
	\begin{enumerate}[(a)]
		\item
		The intersection number $\langle \mathcal{M}_1,\dots,\mathcal{M}_r\rangle$ is given by
		$$\langle \mathcal{M}_1,\dots,\mathcal{M}_r\rangle=\sum_{\tau\in \mathcal{S}_r}\sum_{\pi\in \Pi_r}(-g)^{|\pi|}\prod_{B\in\pi}\frac{1}{|B|}\sum_{\sigma\colon \mathbb{Z}/|B|\xrightarrow{\sim}B} \prod_{j=0}^{|B|-1}t_{\tau(\sigma(j)),\sigma(j),\sigma(j+1)},$$
		where $\mathcal{S}_r$ denotes the symmetric group of $\lbrace 1,\dots,r\rbrace$ and $\Pi_r$ is the set of all partitions of $\lbrace 1,\dots,r\rbrace$.
		\item
		The arithmetic intersection number $\langle\overline{\mathcal{M}}_1,\dots,\overline{\mathcal{M}}_{r+1}\rangle$ is given by
		$$\frac{3gc_1+(2g+1)c_3}{24(g-1)}\hat{\omega}^2-\frac{c_3}{12}\cdot \varphi(X)+\frac{(g-1)(c_1+c_3-(g-1)c_2)}{2}h_{NT}(x_\alpha)$$
		where
		\begin{align*}
		c_1=&\sum_{\tau\in\mathcal{S}_{r+1}}\sum_{\pi\in \Pi_{r}}(-g)^{|\pi|-1}\sum_{B'\in\pi}\left(\prod_{B\in \pi\setminus\lbrace B'\rbrace}\frac{1}{|B|}\sum_{\sigma\colon \mathbb{Z}/|B|\xrightarrow{\sim}B}\prod_{j=0}^{|B|-1}t_{\tau(\sigma(j)),\sigma(j),\sigma(j+1)}\right)\\
		&\times\sum_{k\in B'}\sum_{\sigma\colon \mathbb{Z}/|B'|\xrightarrow{\sim} B'}t_{\tau(\sigma(0)),\sigma(0),k}t_{\tau(r+1),\sigma(1),k}\prod_{j=1}^{|B'|-1}t_{\tau(\sigma(j)),\sigma(j),\sigma(j+1)},
		\end{align*}
		\begin{align*}
		c_2&=\sum_{\tau\in\mathcal{S}_{r+1}}\sum_{\pi\in \Pi_{r}}(-g)^{|\pi|-1}\sum_{B'\in\pi}\left(\prod_{B\in \pi\setminus\lbrace B'\rbrace}\frac{1}{|B|}\sum_{\sigma\colon \mathbb{Z}/|B|\xrightarrow{\sim}B}\prod_{j=0}^{|B|-1}t_{\tau(\sigma(j)),\sigma(j),\sigma(j+1)}\right)\\
		&\times\sum_{1\le j<k\le |B'|}\sum_{\sigma\colon \mathbb{Z}/|B'|\xrightarrow{\sim} B'}t_{\tau(\sigma(0)),\sigma(0),\sigma(k)}t_{\tau(r+1),\sigma(1),\sigma(j)}\prod_{j=1}^{|B'|-1}t_{\tau(\sigma(j)),\sigma(j),\sigma(j+1)}
		\end{align*}
		and
		\begin{align*}
		c_3&=\sum_{\tau\in\mathcal{S}_{r+1}}\sum_{\pi\in \Pi_{r}}(-g)^{|\pi|-1}\sum_{B'\in\pi}\left(\prod_{B\in \pi\setminus\lbrace B'\rbrace}\frac{1}{|B|}\sum_{\sigma\colon \mathbb{Z}/|B|\xrightarrow{\sim}B}\prod_{j=0}^{|B|-1}t_{\tau(\sigma(j)),\sigma(j),\sigma(j+1)}\right)\\
		&\times\sum_{1\le j<k\le |B'|}\sum_{\sigma\colon \mathbb{Z}/|B'|\xrightarrow{\sim} B'}t_{\tau(\sigma(0)),\sigma(0),\sigma(j)}t_{\tau(r+1),\sigma(1),\sigma(k)}\prod_{j=1}^{|B'|-1}t_{\tau(\sigma(j)),\sigma(j),\sigma(j+1)}.
		\end{align*}
	\end{enumerate}
\end{Thm}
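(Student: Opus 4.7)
The plan is to expand each $\overline{\mathcal{M}}_l=\frac{1}{2}\sum_{j,k}t_{l,j,k}\hat{\Delta}^\alpha_{jk}$ by multilinearity and reduce both statements to a combinatorial sum over multigraphs on $\{1,\dots,r\}$. For each choice of $(j_l,k_l)$ one obtains an edge (a loop if $j_l=k_l$); the total configuration gives a multigraph whose connected components are in bijection with the blocks of a partition $\pi\in\Pi_r$. The combinatorics in the theorem is precisely the combinatorics of these multigraphs, with $\tau\in\mathcal{S}_r$ recording the assignment of $\mathcal{M}_l$'s to edges and $\sigma$ recording the cyclic orientation of each block.

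The heart of the argument is a vanishing statement provided by the twisting. Because $\alpha$ has degree one, the modified bundle $\hat{\Delta}^\alpha$ on $X^2$ has the property that its restriction to any horizontal or vertical slice has degree zero, while on the diagonal $\hat{\Delta}^\alpha|_\Delta=-\hat{\omega}^\alpha$. Iterating these relations with the projection formula and eliminating one factor of $X^r$ at a time, the intersection $\prod_l\hat{\Delta}^\alpha_{j_l k_l}$ on $X^r$ vanishes unless every connected component of the $(j_l,k_l)$-multigraph is a cycle (a loop being the degenerate $1$-cycle). Each surviving $k$-cycle on a block $B$ contributes the degree $-g$ of (half of) $-\hat{\omega}^\alpha$ times the cyclic trace $\prod_j t_{\tau(\sigma(j)),\sigma(j),\sigma(j+1)}$, with the factor $1/|B|$ cancelling the $|B|$-fold rotational redundancy of the oriented cycle; accumulating one factor $-g$ per block yields $(-g)^{|\pi|}$ and proves part (a).

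For part (b), a dimension count forces exactly one of the $r+1$ factors to contribute an arithmetic degree while the remaining $r$ form a cycle configuration as in part (a). I would single out the distinguished block $B'$ that carries the arithmetic contribution: within $B'$ the extra divisor $\overline{\mathcal{M}}_{r+1}$ must attach to two edges of the cycle, and the three possible attachment patterns (two consecutive positions $\sigma(0),\sigma(1)$; two non-consecutive positions in one orientation; or in the opposite orientation) produce the three combinatorial sums $c_1$, $c_2$, $c_3$. Each pattern reduces to a single arithmetic triple intersection on $X$ or $X^2$ involving $\hat{\omega}^\alpha$ and $\hat{\Delta}^\alpha$; using Zhang's theory of admissible metrics together with \cite[(5.4)]{Zha93}, these base triples evaluate to explicit linear combinations of $\hat{\omega}^2$, $\varphi(X)$ and $h_{NT}(x_\alpha)$. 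Assembling the three attachment patterns then yields the stated coefficients $\tfrac{3gc_1+(2g+1)c_3}{24(g-1)}$, $-\tfrac{c_3}{12}$ and $\tfrac{(g-1)(c_1+c_3-(g-1)c_2)}{2}$.

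The main obstacle I expect is the combinatorial bookkeeping in the $(r+1)$-fold expansion for (b): one must check that, after all vanishings, the only surviving patterns are precisely these three shapes and that the symmetry factors agree with those appearing in $c_1,c_2,c_3$. A secondary subtlety is the evaluation of the base-case arithmetic triples on $X$ and $X^2$ in terms of $\hat{\omega}^2$, $\varphi(X)$ and $h_{NT}(x_\alpha)$, which must reproduce exactly the coefficients in the statement; any sign or normalization error in the modification (\ref{modification}) propagates directly into these coefficients.
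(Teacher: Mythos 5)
Your proposal follows essentially the same route as the paper: expand by multilinearity into multigraphs on $\lbrace 1,\dots,r\rbrace$, use degree counts and contraction of degree-$2$ vertices (via the projection formula and the degree-zero restrictions of $\hat{\Delta}^{\alpha}$) to show that only disjoint unions of cycles survive in part (a) and only the three special shapes survive in part (b), evaluate the remaining base graphs as explicit arithmetic intersections on $X$ and $X^2$, and track the symmetry factors. The one ingredient you leave implicit is that the three-path (theta-graph) base case rests on the identity $\langle\hat{\Delta},\hat{\Delta},\hat{\Delta}\rangle=\hat{\omega}^2-\varphi(X)$ from \cite[Proposition~7.1]{dJo18}, which is the sole source of the $\varphi(X)$ term.
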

The proof of the theorem is based on our method of associating graphs to Deligne pairings introduced in \cite[Section~5.3]{Wil17}. The modification in Equation (\ref{modification}) makes the combinatorics simpler, which makes it possible to prove the theorem.

To obtain Theorem \ref{neron-tate-height} from Theorem \ref{mainthm} we will show, that it holds
$$f_{m,\alpha}^*\hat{\mathcal{L}}=-\frac{1}{2}\sum_{j,k=1}^r m_jm_k\hat{\Delta}^{\alpha}_{jk}.$$
Hence by the projection formula, Theorem \ref{mainthm} can be applied to compute the N\'eron--Tate heights in Theorem \ref{neron-tate-height} and it is left as a combinatorial exercise to express them in the simplified form.

As an application of the arithmetic Hodge index theorem for adelic line bundles by Yuan--Zhang \cite[Theorem~3.2]{YZ17}, we will prove the following result.
\begin{Thm}\label{generallowerbound}
	Let $X$ be any smooth projective geometrically connected curve of genus $g\ge2$ with semistable reduction over a number field $K$, $r>0$ an integer and $m\in\left(\mathbb{Z}\setminus\lbrace0\rbrace\right)^r$ any vector of non-zero integers.
	For any rational numbers $t_{jk}\in \mathbb{Q}$ with $1\le j,k\le r$ satisfying 
	$$g\sum_{j=1}^r t_{jj}=\sum_{j\neq k}^r t_{jk},$$
	define the adelic $\mathbb{Q}$-line bundle $\overline{\mathcal{M}}=\sum_{j,k=1}^r t_{jk}m_j m_k\hat{\Delta}^{\alpha}_{jk}$ on $X^r$.
	Then it holds
	$$\langle (f_{m,\alpha}^*\hat{\mathcal{L}})^{r-1},\overline{\mathcal{M}}^2\rangle\le 0.$$
\end{Thm}
We remark, that for $r\ge g+2$ we even have $\langle (f_{m,\alpha}^*\hat{\mathcal{L}})^{r-1},\overline{\mathcal{M}}^2\rangle=0$.
We will deduce Theorem \ref{lowerbound} from Theorem \ref{generallowerbound} by applying it to suitable choices of $r$, $m$, and $t_{jk}$.

\subsubsection*{Outline}
In Section \ref{secadelic} we recall the required facts on adelic line bundles. We discuss Deligne pairings and their relations to intersection numbers in Section \ref{secdeligne}. In Section \ref{secadmissible} we recall the definition of admissible metrics on abelian varieties and on curves.

In Section \ref{secgraphs} we associate a graph to certain intersection numbers and vice versa, an intersection number to any graph. Further, we express the intersection number associated to a graph in terms of usual invariants of the graph by several reduction steps. 
In the subsequent section we prove Theorem \ref{mainthm} using this method. We study the self-intersection numbers of $f_{m,\alpha}^*\hat{\mathcal{L}}$ as a special case of Theorem \ref{mainthm} in Section \ref{secnerontate}. In particular, we obtain a proof for Theorem \ref{neron-tate-height}. Finally, in the last section we deduce Theorems \ref{lowerbound} and \ref{generallowerbound} from the arithmetic Hodge index theorem for adelic line bundles.

\subsubsection*{Terminology}
We always denote by $K$ a number field of degree $d_K=[K:\mathbb{Q}]$ and we write $\mathcal{O}_K$ for its ring of integers.
We set $M(K)=M(K)_0\cup M(K)_\infty$, where $M(K)_0$ is the set of finite places of $K$ and $M(K)_\infty$ denotes the set of complex embeddings $K\to \mathbb{C}$. For $v\in M(K)_0$ we write $K_v$ for the completion of $K$ with respect to $v$ and we fix an algebraic closure $\overline{K}_v$ with ring of integers $\mathcal{O}_{\overline{K}_v}$. For $v\in M(K)_\infty$ we denote $K_v$ for the completion of the image of $v$ in $\mathbb{C}$ and $\overline{K}_v=\mathbb{C}$. We fix a collection of absolute values $|\cdot|_v$ on $K_v$ for all $v\in M(K)$, which satisfies the product formula.

Throughout this paper, $\pi\colon X\to \mathrm{Spec}(K)$ will always be a smooth projective geometrically connected curve of genus $g\ge 2$ with semistable reduction over $\mathcal{O}_K$. We write $\omega$ for the canonical bundle of $X$ and we denote $J=\mathrm{Pic}^0(X)$ for the Jacobian of $X$. We fix an ample symmetric line bundle $\mathcal{L}$ on $J$, which induces the canonical principal polarization of $J$ and which is rigidified at the origin.

We will always denote by $r$ a positive integer, $\pi_r\colon X^r\to \mathrm{Spec}(K)$ for the structure morphism and for $j_1,\dots,j_l\le r$ we write $p_{j_1,\dots,j_l}\colon X^r\to X^l$ for the projection to the $j_1$-th, $\dots$, $j_l$-th factors. Furthermore, $p^k\colon X^r\to X^{r-1}$ denotes the projection forgetting the $k$-th factor.

Moreover, we will always denote by $m\in\left(\mathbb{Z}\setminus\lbrace 0\rbrace\right)^r$ an $r$-dimensional vector of non-zero integers. Further, $\alpha$ will always be a line bundle of degree $1$ on $X$. We will often choose $\alpha=\frac{\omega}{2g-2}$, although this is only defined up to a torsion element in $J$. Also, we shortly write $x_\alpha=\alpha-\frac{\omega}{2g-2}$. We denote the morphism $f_{m,\alpha}$ as in (\ref{mapf}) and we denote $Z_{m,\alpha}$ for the cycle on $J$ induced by its image.

\section{Adelic line bundles}\label{secadelic}
In this section we recall the preliminaries on adelic line bundles. The main reference is \cite[Section~1]{Zha95}, see also \cite[Section~2]{dJo18}. Let $Y$ be a smooth projective variety over $K$ and $\mathcal{M}$ a line bundle on $Y$. For $v\in M(K)$ we denote $Y_v$ and $\mathcal{M}_v$ for the pullbacks of $Y$ and $\mathcal{M}$ induced by the canonical map $K\to \overline{K}_v$. A metric on $\mathcal{M}_v$ is by definition a collection of $\overline{K}_v$-norms on each fibre $y^*\mathcal{M}_v$ for $y\in Y_v(\overline{K}_v)$.

If $v\in M(K)_0$ is a finite place, we obtain a natural metric $\|\cdot\|_{\widetilde{\mathcal{M}_v}}$ of $\mathcal{M}_v$ for any positive integer $e$ and any projective flat model $(\widetilde{Y_v},\widetilde{\mathcal{M}_v})$ of $(Y_v,\mathcal{M}_v^{\otimes e})$ over $\mathrm{Spec}(\mathcal{O}_{\overline{K}_v})$. This metric is given as follows: For any point $y\in Y_v(\overline{K}_v)$, with its unique extension $\widetilde{y}\in \widetilde{Y_v}(\mathcal{O}_{\overline{K}_v})$, and any $m\in y^*\mathcal{M}_v$ we put 
$$\|m\|_{\widetilde{\mathcal{M}_v}}=\inf_{a\in \overline{K}_v}\lbrace |a|_v^{1/e}: m\in a \widetilde{y}^*\widetilde{\mathcal{M}_v}\rbrace.$$
In general, a metric $\|\cdot\|$ on $\mathcal{M}_v$ is called continuous and bounded, if there exists a model $(\widetilde{Y_v},\widetilde{\mathcal{M}_v})$ of $(Y_v,\mathcal{M}_v)$, such that $\log\frac{\|\cdot\|}{\|\cdot\|_{\widetilde{\mathcal{M}_v}}}$ is continuous and bounded.

An adelic metric for $\mathcal{M}$ is a collection of metrics $\|\cdot\|=\lbrace\|\cdot\|_{v}~|~v\in M(K)\rbrace$ such that each $\|\cdot\|_v$ is bounded, continuous and $\mathrm{Gal}(\overline{K}_v/K_v)$ invariant and there exist a non-empty open subset $U\subseteq \mathrm{Spec}(\mathcal{O}_K)$ and a model $(\widetilde{Y},\widetilde{\mathcal{M}})$ of $(Y,\mathcal{M})$ over $U$, such that for every $v\in U$ we have $\|\cdot\|_v=\|\cdot\|_{\widetilde{\mathcal{M}}_v}$, where $\widetilde{\mathcal{M}}_v$ is the pullback of $\widetilde{\mathcal{M}}$ induced by $\mathrm{Spec}(\mathcal{O}_{\overline{K}_v})\to U$. We call an adelic metric $\|\cdot\|$ semipositive, if there is a sequence of models $(\widetilde{Y}_n,\widetilde{\mathcal{M}}_n)$ of $(Y,\mathcal{M})$ over $\mathrm{Spec}(\mathcal{O}_K)$ with $\widetilde{\mathcal{M}}_n$ relatively semipositive on $\widetilde{Y}_n$, such that for all $v\in M(K)_{0}$ the series $\log\frac{\|\cdot\|_{\widetilde{\mathcal{M}}_{n.v}}}{\|\cdot\|_v}$ converges to $0$ uniformly in $Y(\overline{K}_v)$ and for each $v\in M(K)_\infty$ the metric $\|\cdot\|_v$ is a smooth hermitian metric with semipositive curvature form.
We call an adelic metrized line bundle $\overline{\mathcal{M}}=(\mathcal{M},\|\cdot\|)$ integrable if there are two semipositive metrized line bundles $\overline{\mathcal{M}}_1$ and $\overline{\mathcal{M}}_2$ and an isometry $\overline{\mathcal{M}}\cong \overline{\mathcal{M}}_1\otimes\overline{\mathcal{M}}_2^{\otimes-1}$.

Zhang has shown in \cite[(1.5)]{Zha95}, that the arithmetic intersection product by Gillet and Soul\'e \cite{GS90} can be extended to integrable metrized line bundles. Hence, for integrable line bundles $\overline{\mathcal{M}}_1,\dots,\overline{\mathcal{M}}_r$ and an integral cycle $Z$ on $Y$ of dimension $d_Z=\dim Z$ we obtain a symmetric and multi-linear intersection number
$$\langle \overline{\mathcal{M}}_1,\dots,\overline{\mathcal{M}}_r|Z\rangle,$$
which is zero for $d_Z\notin\lbrace r-1,r\rbrace$ and for $d_Z=r$ we have
$$\langle \overline{\mathcal{M}}_1,\dots,\overline{\mathcal{M}}_r|Z\rangle=\langle \mathcal{M}_1,\dots,\mathcal{M}_r|Z\rangle=c_1(\mathcal{M}_1)\dots c_1(\mathcal{M}_r)[Z]\in\mathbb{Z}.$$
We shortly write $\langle \overline{\mathcal{M}}^{s},\overline{\mathcal{N}}^{r-s}|Z\rangle=\langle \overline{\mathcal{M}}_1,\dots,\overline{\mathcal{M}}_r|Z\rangle$ if $\overline{\mathcal{M}}_1=\dots=\overline{\mathcal{M}}_s=\overline{\mathcal{M}}$ and $\overline{\mathcal{M}}_{s+1}=\dots=\overline{\mathcal{M}}_r=\overline{\mathcal{N}}$. Further, we write $\langle \overline{\mathcal{M}}_1,\dots,\overline{\mathcal{M}}_r\rangle=\langle \overline{\mathcal{M}}_1,\dots,\overline{\mathcal{M}}_r|Z\rangle$ if $Z=Y$.
If we denote the space of integrable line bundles by $\widehat{\mathrm{Pic}}(X)$, then an integrable $\mathbb{Q}$-line bundle is an element in $\widehat{\mathrm{Pic}}(X)\otimes_\mathbb{Z}\mathbb{Q}$. By multi-linearity the arithmetic intersection product also extends to integrable $\mathbb{Q}$-line bundles.

Next, we define the notion of nefness for integrable line bundles. Let $\widetilde{Y}$ be a model of $Y$ over $\mathcal{O}_K$. A hermitian line bundle on $\widetilde{Y}$ is a pair $\overline{\mathcal{M}}=(\mathcal{M},\|\cdot\|)$ consisting of a line bundle $\mathcal{M}$ on $\widetilde{Y}$ and a collection of smooth hermitian metrics $\|\cdot\|=\lbrace\|\cdot\|_{v}~|~v\in M(K)_{\infty}\rbrace$ on the pullbacks of $\mathcal{M}$ induced by the $v$'s in $M(K)_{\infty}$, which is invariant under the action of the complex conjugation. A hermitian line bundle $\overline{\mathcal{M}}$ is called nef if for all irreducible closed subvarieties $\mathcal{Z}\subseteq \widetilde{Y}$ the arithmetic intersection product $\langle \overline{\mathcal{M}}^{\dim\mathcal{Z}}|\mathcal{Z}\rangle$ is non-negative and if $\|\cdot\|_v$ has a semipositive curvature form for all $v\in M(K)_{\infty}$.

Now let $\overline{\mathcal{M}}$ be an integrable line bundle on $Y$. We call $\overline{\mathcal{M}}$ nef if there is a sequence of models $(\widetilde{Y}_n,\widetilde{\mathcal{M}}_n)$ of $(Y,\mathcal{M})$ over $\mathrm{Spec}(\mathcal{O}_K)$, where the $\widetilde{\mathcal{M}}_n$'s are now nef hermitian line bundles, such that the adelic model metrics of $(\widetilde{Y}_n,\widetilde{\mathcal{M}}_n)$ uniformly converge to the adelic metric of $\overline{\mathcal{M}}$.

We define the height of an integral cycle $Z$ on $Y$ of dimension $d_Z=\dim Z$ with respect to an integrable ample line bundle $\overline{\mathcal{M}}$ to be
\begin{align}\label{height}
h'_{\overline{\mathcal{M}}}(Z)=\frac{\langle \overline{\mathcal{M}}^{d_Z+1}|Z\rangle}{d_K(d_Z+1)\langle \mathcal{M}^{d_Z}|Z\rangle}.
\end{align}

We will use the following projection formula. Let $f\colon Y\to S$ be a morphism of smooth projective varieties over $K$ and $\overline{\mathcal{M}}_1,\dots,\overline{\mathcal{M}}_r$ integrable line bundles on $S$. Further, let $Z$ be an integral cycle on $Y$. Then it is shown in \cite[Proposition~3.1]{dJo18}, that
\begin{align}\label{projectionformula}
\langle \overline{\mathcal{M}}_1,\dots,\overline{\mathcal{M}}_r|f_*(Z)\rangle=\langle f^*\overline{\mathcal{M}}_1,\dots,f^*\overline{\mathcal{M}}_r|Z\rangle
\end{align}
If we further set $d_Y=\dim Y$ and $d_S=\dim S$ and choose an integrable line bundle $\overline{\mathcal{N}}$ on $Y$, then we have
\begin{align}\label{vanishingbydimension}
\langle(f^*\overline{\mathcal{M}}_1)^{r+1},\overline{\mathcal{N}}^{d_Y-r}\rangle=0 \quad \text{and} \quad \langle(f^*\mathcal{M}_1)^r,\mathcal{N}^{d_Y-r}\rangle=0
\end{align}
for $r>d_S$. If the metrics are induced by models, the first equation follows from \cite[Section~4.4]{GS90}, as the arithmetic cycle $\overline{\mathcal{M}}_1^{r+1}$ is trivial. In general it follows by taking limits. The second equation follows by classical intersection theory.

There is another projection formula.
Let $S$ and $T$ be smooth projective varieties over $K$, $\overline{\mathcal{M}}_1,\dots,\overline{\mathcal{M}}_s$ integrable line bundles on $S$ and $\overline{\mathcal{N}}_1,\dots,\overline{\mathcal{N}}_t$ integrable line bundles on $T$. Denote $p\colon S\times T\to S$ and $q\colon S\times T\to T$ for the two projections. It is shown in \cite[Proposition~3.2]{dJo18}, that we have
\begin{align}\label{projection}
\langle p^*\overline{\mathcal{M}}_1,\dots, p^*\overline{\mathcal{M}}_s,q^*\overline{\mathcal{N}}_1,\dots, q^*\overline{\mathcal{N}}_t|S\times T\rangle=\langle\overline{\mathcal{M}}_1,\dots,\overline{\mathcal{M}}_s|S\rangle \langle\overline{\mathcal{N}}_1,\dots, \overline{\mathcal{N}}_t|T\rangle.
\end{align}
\section{Deligne pairings}\label{secdeligne}
In this section we collect some facts about Deligne pairings. Details can be found in \cite[Section~1.1]{Zha96} as well as in \cite[Section~3]{dJo18}.
Let $f\colon Y\to S$ be a smooth morphism of smooth projective varieties over $K$ of relative dimension $n$ and $\overline{\mathcal{M}}_0,\dots,\overline{\mathcal{M}}_n$ semipositive adelic line bundles on $Y$. The Deligne pairing $\langle \overline{\mathcal{M}}_0,\dots,\overline{\mathcal{M}}_n\rangle$ is a integrable line bundle on $S$. Its underlying line bundle is locally generated by symbols $\langle m_0,\dots,m_n\rangle$, where $m_i$ is a local section of $\mathcal{M}_i$, and for any $0\le i\le n$ and any function $h$ on $Y$, such that the intersection $\prod_{j\neq i}\mathrm{div}(m_j)=\sum_{j}n_j Z_j$ is finite over $S$ and has empty intersection with $\mathrm{div}(h)$, we have the relation
$$\langle m_0,\dots,m_{i-1},h m_i,m_{i+1},\dots,m_n\rangle=\prod_{j} \mathrm{Norm}_{Z_j/S}(h)^{n_j}\langle m_0,\dots,m_n\rangle.$$

The metrics of $\langle \overline{\mathcal{M}}_0,\dots,\overline{\mathcal{M}}_n\rangle$ can be described recursively. We may assume $S=\mathrm{Spec}(K)$. For any non-zero local sections $m_0,\dots,m_n$ of $\overline{\mathcal{M}}_0,\dots,\overline{\mathcal{M}}_n$ with empty common zero locus, such that $\mathrm{div}(m_n)$ is a prime divisor on $X$ and $m_i|_{\mathrm{div}(m_n)}$ is non-zero for all $i$, we have
\begin{align}\label{delignerecursive}
\log \|\langle m_0,\dots,m_n\rangle\|_v=&\log\|\langle m_0|_{\mathrm{div}(m_n)},\dots,m_{n-1}|_{\mathrm{div}(m_n)}\rangle\|_v\\
&+\int_{Y(\bar{K}_v)}\log\|m_n\|_v c_1(\overline{\mathcal{M}}_0)\dots c_1(\overline{\mathcal{M}}_{n-1})\nonumber
\end{align}
for all $v\in M(K)$. We have to make sense of the integral if $v\in M(K)_0$.
It is enough to do this, if the adelic line bundles $\overline{\mathcal{M}}_0,\dots,\overline{\mathcal{M}}_n$ are given by models $(\widetilde{Y},\widetilde{\mathcal{M}}_j)$ of $(Y,\mathcal{M}_j^{\otimes e})$ over $\mathrm{Spec}(\mathcal{O}_K)$. In general, one has to take limits. 
If we write $\widetilde{m}_n$ for the section of $\widetilde{\mathcal{M}}_n$ extending the section $m_n^{\otimes e}$ of $\mathcal{M}_n^{\otimes e}$, then $V=\mathrm{div}(\widetilde{m}_n)-e\cdot\overline{\mathrm{div}(m_n)}$ is a Weil divisor $V=\sum_{v\in M(K)_0} V_v$ supported in the closed fibres of $\widetilde{Y}$. The integral is defined to be
\begin{align*}
\int_{Y(\bar{K}_v)}\log\|m_n\|_v c_1(\overline{\mathcal{M}}_0)\dots c_1(\overline{\mathcal{M}}_{n-1})=c_1(\widetilde{\mathcal{M}}_0)\dots c_1(\widetilde{\mathcal{M}}_{n-1})[V_v]\log N(v)/e^n.
\end{align*}
It turns out, that $\langle \overline{\mathcal{M}}_0,\dots,\overline{\mathcal{M}}_n\rangle$ with this metric is indeed a integrable line bundle on $S$. For any integrable $\mathbb{Q}$-line bundles $\overline{\mathcal{M}}_0,\dots,\overline{\mathcal{M}}_n$ on $Y$ we also obtain an integrable $\mathbb{Q}$-line bundle $\langle \overline{\mathcal{M}}_0,\dots,\overline{\mathcal{M}}_n\rangle$ on $S$ by multi-linearity.

If $S=\mathrm{Spec}(K)$, the Deligne pairing is given by the intersection number
$$\langle \overline{\mathcal{M}}_0,\dots,\overline{\mathcal{M}}_n|Y\rangle=\langle \langle \overline{\mathcal{M}}_0,\dots,\overline{\mathcal{M}}_n\rangle|\mathrm{Spec}(K)\rangle.$$
In general, one has for integrable $\mathbb{Q}$-line bundles $\overline{\mathcal{M}_0},\dots,\overline{\mathcal{M}}_n$ on $Y$ and integrable $\mathbb{Q}$-line bundles $\overline{\mathcal{N}_1},\dots,\overline{\mathcal{N}}_r$ on $S$ the identity
\begin{align}\label{deligne1}
\langle f^*(\overline{\mathcal{N}}_1),\dots,f^*(\overline{\mathcal{N}}_r),\overline{\mathcal{M}}_0,\dots,\overline{\mathcal{M}}_n|Y\rangle=\langle\overline{\mathcal{N}}_1,\dots,\overline{\mathcal{N}}_r,\langle \overline{\mathcal{M}}_0,\dots,\overline{\mathcal{M}}_n\rangle|S\rangle
\end{align}
and the identity
\begin{align}\label{delgine2}
\langle f^*(\overline{\mathcal{N}}_1),\dots,f^*(\overline{\mathcal{N}}_r),\overline{\mathcal{M}}_1,\dots,\overline{\mathcal{M}}_n|Y\rangle=c_1(\mathcal{M}_1)\dots c_1(\mathcal{M}_n)[f]\langle \overline{\mathcal{N}}_1,\dots,\overline{\mathcal{N}}_r|S\rangle,
\end{align}
where $c_1(\mathcal{M}_1)\dots c_1(\mathcal{M}_n)[f]$ denotes the multidegree of the generic fibre of $f$ with respect to the line bundles $\mathcal{M}_1,\dots,\mathcal{M}_n$. This follows from \cite[Proposition~3.5]{dJo18}.
\section{admissible metrics}\label{secadmissible}
Continuing the notation of the introduction, we recall the admissible adelic metrics for the line bundle $\mathcal{L}$ on $J$, for the canonical bundle $\omega$ on $X$ and for the diagonal bundle $\Delta=\mathcal{O}_{X^2}(\Delta)$ on $X^2$. Details can be found in \cite{Zha95} and \cite[Section~4]{dJo18}.

As $\mathcal{L}$ is a symmetric bundle and rigidified at the origin, there is a unique choice of an isomorphism $\phi\colon\mathcal{L}^{\otimes 4}\xrightarrow{\sim} [2]^*\mathcal{L}$, where $[2]\colon J\to J$ denotes the multiplication by $2$. It was shown by Zhang \cite[Theorem~2.2]{Zha95}, that there is a unique integrable adelic metric $(\|\cdot\|_v)_{v\in M(K)}$ on $\mathcal{L}$, such that for each $v\in M(K)$ the isomorphism $\phi$ becomes an isometry. We call this metric the admissible metric and we write $\hat{\mathcal{L}}$ for the corresponding integrable line bundle. 
For any closed subvariety $Z$ of $J$ the height $h'_{\mathcal{L}}(Z)=h'_{\hat{\mathcal{L}}}(Z)$ defined in Equation (\ref{height}) coincides with the N\'eron-Tate height defined by Philippon \cite{Phi91} and Gubler \cite[(8.7)]{Gub94}, see \cite[Section~3.1]{Zha95}. We will need, that $\hat{\mathcal{L}}$ is nef in the sense of Section \ref{secadelic}.

\begin{Lem}\label{lemnef}
	The integrable line bundle $\hat{\mathcal{L}}$ is nef.
\end{Lem}
\begin{proof}
The construction in \cite[(2.3)]{Zha95} gives a sequence of models $(\widetilde{J}_n,\widetilde{\mathcal{L}}_n)$ of $(J,\mathcal{L}^{\otimes e_n})$ over $\mathrm{Spec}(\mathcal{O}_K)$ for some $e_n$, such that $\widetilde{\mathcal{L}}_n$ is relatively ample.
For every $n$ and every $v\in M(K)_\infty$ we choose the hermitian metric on $\widetilde{\mathcal{L}}_{n,v}\cong\mathcal{L}_{v} $ to be the unique metric, such that the pullback of the isomorphism $\phi\colon\mathcal{L}^{\otimes 4}\xrightarrow{\sim} [2]^*\mathcal{L}$ by $v\colon K\to \mathbb{C}$ is an isometry. By \cite[Proposition II.2.1]{Mor85} this metric can also be described as the unique metric, such that the rigidification at the orign is an isometry and its curvature form is translation-invariant and therefore positive. We denote $\overline{\mathcal{L}}_n=(\mathcal{L},\|\cdot\|_n)$ for the adelic line bundle induced by the model $(\widetilde{J}_n,\widetilde{\mathcal{L}}_n)$ and equipped with the hermitian metrics above.
Then the sequence of the adelic metrics of $\overline{\mathcal{L}}_n$ uniformly converges to the adelic metric of $\hat{\mathcal{L}}$.

Let now $\mathcal{Z}\subseteq \widetilde{J}_n$ be an irreducible closed subvariety. As $\widetilde{\mathcal{L}}_n$ is ample and $c_1(\widetilde{\mathcal{L}}_n)$ is positive we know by \cite[Proposition 3.2.4 Remark (iii)]{BGS94} that there exists a constant $C_n\ge 0$ independent of $\mathcal{Z}$, such that
$$\langle \widetilde{\mathcal{L}}_n^{\dim \mathcal{Z}}|\mathcal{Z}\rangle\ge -C_n\cdot \deg_{\mathcal{L}^{\otimes e_n}}(\mathcal{Z}).$$
We may choose $C_n=\max\left\lbrace-\inf_{\mathcal{Z} \subseteq \widetilde{J}_n} \frac{\langle \widetilde{\mathcal{L}}_n^{\dim \mathcal{Z}}|\mathcal{Z}\rangle}{\deg_{\mathcal{L}^{\otimes e_n}}(\mathcal{Z})},0\right\rbrace$.
If $\mathcal{Z}$ is contained in the fiber of a closed point of $\mathrm{Spec}(\mathcal{O}_K)$, we always have 
$\langle \widetilde{\mathcal{L}}_n^{\dim \mathcal{Z}}|\mathcal{Z}\rangle\ge0$ by the ampleness of $\widetilde{\mathcal{L}}_n$. Otherwise, $\mathcal{Z}$ is the closure in $\widetilde{J}_n$ of an irreducible cycle $Z\subseteq J$. If we fix $Z\subseteq J$ and consider for varying $n$ the closure $\mathcal{Z}_n$ in $\widetilde{J}_n$ of $Z\subseteq J$, we obtain
$$\lim_{n\to \infty} \frac{\langle \widetilde{\mathcal{L}}_n^{\dim \mathcal{Z}_n}|\mathcal{Z}_n\rangle}{e_n^{\dim\mathcal{Z}_n}}=\lim_{n\to \infty}\langle \overline{\mathcal{L}}_n^{\dim Z+1}|Z\rangle=\langle \hat{\mathcal{L}}^{\dim Z+1}|Z\rangle\ge 0$$
by \cite[Theorem 2.4 (b)]{Zha95}. Hence, the uniform convergence of $\overline{\mathcal{L}}_n$ implies that $\lim_{n\to\infty}C_n= 0$. This can be checked by formula (\ref{delignerecursive}), respectively by its global version \cite[Eq. (3.3)]{dJo18}, and induction on $\dim Z$.

If we write $\widetilde{\mathcal{M}}_n$ for the hermitian line bundle obtained from $\widetilde{\mathcal{L}}_n$ by multiplying the hermitian metric by the constant $\exp(-2C_n/[K:\mathbb{Q}])$, we can deduce from \cite[Proposition 3.2.2]{BGS94} that
\begin{align*}
	\langle \widetilde{\mathcal{M}}_n^{\dim \mathcal{Z}}|\mathcal{Z}\rangle&=\langle \widetilde{\mathcal{L}}_n^{\dim \mathcal{Z}}|\mathcal{Z}\rangle+C_n\cdot \dim\mathcal{Z}\cdot \deg_{\mathcal{L}^{\otimes e_n}}(\mathcal{Z})\\
	&\ge C_n (\dim \mathcal{Z}-1)\cdot \deg_{\mathcal{L}^{\otimes e_n}}(\mathcal{Z})\ge 0
\end{align*}
if $\dim \mathcal{Z}\ge1$. If $\dim \mathcal{Z}=0$, we have $\langle \widetilde{\mathcal{M}}_n^{\dim \mathcal{Z}}|\mathcal{Z}\rangle\ge 0$ by \cite[Equation (3.1.4)]{BGS94}. We write $\overline{\mathcal{M}}_n=(\mathcal{L},\|\cdot\|'_n)$ for the adelic line bundle associated to the model $(\widetilde{J}_n,\widetilde{M}_n)$.
As the constant $\exp(-2C_n/[K:\mathbb{Q}])$ converges to $1$, the sequence of the metrics of $\overline{\mathcal{M}}_n$ uniformly converges to the metric of $\hat{\mathcal{L}}$. Since the hermitian line bundles $\widetilde{\mathcal{M}}_n$ are nef,~ $\hat{\mathcal{L}}$ is nef in the sense of Section \ref{secadelic}.
\end{proof}

Next we give admissible metrics for any line bundle $\mathcal{M}$ on $X$. For this purpose, we may assume that $\mathcal{M}$ has non-zero degree. In general one obtains admissible metrics by taking tensor products. The pullback $f_{(1),\alpha}^*\mathcal{L}^{\otimes 2}$ is canonically isomorphic to $2\alpha+\omega$ on $X$. Hence, for any $\mathcal{M}$ we can find $\alpha, e, e'$ such that $\mathcal{M}^{\otimes e}\cong f_{(1),\alpha}^*\mathcal{L}^{\otimes e'}$. We write $\hat{\mathcal{M}}$ for the integrable line bundle, such that $\hat{\mathcal{M}}^{\otimes e}\cong f_{(1),\alpha}^*\hat{\mathcal{L}}^{\otimes e'}$ is an isometry, and we call $\hat{\mathcal{M}}$ admissible metrized. In particular, we obtain admissible metrics for $\alpha$ and $\omega$.

In a similar way, we obtain an admissible metric for the diagonal bundle $\Delta=\mathcal{O}_{X^2}(\Delta)$. The pullback $f_{(1,-1),\alpha}^*\mathcal{L}^{\otimes 2}$ is canonically isomorphic to $2\Delta+p_1^*\omega+p_2^*\omega$, see for example \cite[Section~4]{dJo18}. We write $\hat{\Delta}$ for the integrable bundle associated to $\Delta$, such that $$2\hat{\Delta}+p_1^*\hat{\omega}+p_2^*\hat{\omega}\cong 2f_{(1,-1),\alpha}^*\hat{\mathcal{L}}$$
is an isometry. If $s\colon X\to X^2$ denotes the embedding of the diagonal, the canonical isomorphism $s^*\Delta\cong -\omega$ yields an isometry $s^*\hat{\Delta}\cong -\hat{\omega}$.

\section{Intersection numbers and graphs}\label{secgraphs}
We discuss a combinatorial method to compute (arithmetic) intersection numbers of adelic line bundles by associating graphs. This method is based on our construction in \cite[Section~5.3]{Wil17}. 
By a graph we always mean an undirected multigraph, which can have loops.

As in the introduction, we denote the following integrable line bundles on $X^r$
\begin{align*}
\hat{\Delta}^{\alpha}_{jk}=\begin{cases} -p_j^*\hat{\omega}^{\alpha} & \text{if } j=k,\\ p_{jk}^*\hat{\Delta}^{\alpha} & \text{if }j\neq k.\end{cases}
\end{align*}
Note, that our sloppy notation ignores the dependence of $\hat{\Delta}^{\alpha}_{jk}$ on $r$.
We write
$$\mathfrak{L}_{\alpha,r}=\lbrace \hat{\Delta}^{\alpha}_{jk} \text{ on } X^r~|~ 1\le j,k\le r\rbrace$$
for the set of all these bundles.
For any tuple $(\overline{\mathcal{M}}_1,\dots,\overline{\mathcal{M}}_n)\in \mathfrak{L}_{\alpha,r}^{(n)}$ in the $n$-th symmetric power of $\mathfrak{L}_{\alpha,r}$ with $\overline{\mathcal{M}}_i=\hat{\Delta}^{\alpha}_{j_i k_i}$ we associate the graph $\Gamma_r(\overline{\mathcal{M}}_1,\dots,\overline{\mathcal{M}}_n)$ as follows:
\begin{itemize}
	\item The set of vertices of $\Gamma_r(\overline{\mathcal{M}}_1,\dots,\overline{\mathcal{M}}_n)$ is $\lbrace v_1,\dots,v_r\rbrace$.
	\item The set of edges of $\Gamma_r(\overline{\mathcal{M}}_1,\dots,\overline{\mathcal{M}}_n)$ is $\lbrace e_1,\dots,e_n\rbrace$, where the edge $e_i$ is given by $e_i=(v_{j_i},v_{k_i})$.
\end{itemize}

Vice versa, if $\Gamma$ is a graph with set of vertices $\lbrace v_1,\dots, v_r\rbrace$ and set of edges $\lbrace e_1,\dots,e_n\rbrace$, where $e_i=(v_{j_i},v_{k_i})$, we associate an $n$-tuple $(\overline{\mathcal{M}}_1,\dots,\overline{\mathcal{M}}_n)\in \mathfrak{L}_{\alpha,r}^{(n)}$, such that $\overline{\mathcal{M}}_i=\hat{\Delta}^{\alpha}_{j_i k_i}$.
We denote the intersection product of the tuple of line bundles associated to a graph $\Gamma$ by $\langle \Gamma\rangle$.
Our goal is to compute the intersection number by invariants of the graph.

We define the degree of a vertex $v_l\in V$ to be
$$\deg v_l=\#\lbrace i~|~j_i=l\rbrace+\#\lbrace i~|~k_i=l\rbrace,$$
which is just the number of edges at $v_l$, counting loops twice. Further, we write $b_0(\Gamma)$ for the number of connected components of a graph $\Gamma$.
For any admissible metrized line bundle $\hat{\mathcal{M}}$ on $X$ with $\deg\mathcal{M}=0$ we denote by $h_{NT}\left(\mathcal{M}\right)=-\langle \hat{\mathcal{M}},\hat{\mathcal{M}}\rangle$ its N\'eron--Tate height, see also \cite[(5.4)]{Zha93}. The main result of this section is the following proposition.
\begin{Pro}\label{graphintersectionnumber}
	Let $\Gamma$ be a graph with $r$ vertices and $n$ edges.
	\begin{enumerate}[(a)]
		\item
		If $r\notin\lbrace n-1,n\rbrace$, then $\langle \Gamma\rangle=0$.
		\item
		Assume $r=n$. If all vertices of $\Gamma$ have degree $2$, i.e. it is a collection of $b_0(\Gamma)$ circles, then 
		$$\langle \Gamma\rangle=(-2g)^{b_0(\Gamma)}.$$
		Otherwise, we have $\langle \Gamma\rangle=0$.
		\item
		Assume $r=n-1$ and write $x_\alpha=\alpha-\frac{\omega}{2g-2}$.
		\begin{enumerate}[(i)]
			\item If $\Gamma$ has one vertex of degree $4$ and all other vertices have degree $2$, then 
			$$\langle \Gamma\rangle=(-2g)^{b_0(\Gamma)-1}\left( \frac{g}{g-1}\hat{\omega}^2+4(g-1)h_{NT}\left(x_\alpha\right)\right).$$
			\item
			If $\Gamma$ has two vertices of degree $3$, which are connected by exactly $1$ path, and all other vertices have degree $2$, then
			$$\langle\Gamma\rangle=-4\cdot(-2g)^{b_0(\Gamma)-1}(g-1)^2h_{NT}\left(x_\alpha\right).$$
			\item
			If $\Gamma$ has two vertices of degree $3$, which are connected by $3$ different paths, and all other vertices have degree $2$, then
			$$\langle \Gamma\rangle=(-2g)^{b_0(\Gamma)-1} \left(\frac{2g+1}{2g-2}\hat{\omega}^2-\varphi(X)+6(g-1)h_{NT}\left(x_\alpha\right)\right).$$
			\item
			In any other case, we have $\langle \Gamma\rangle=0$.
		\end{enumerate}
	\end{enumerate}
\end{Pro}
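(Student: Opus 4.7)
The plan is to reduce all intersection numbers $\langle\Gamma\rangle$ to a small library of base cases on $X$ and $X^2$ by repeatedly contracting degree-$2$ vertices via Deligne pairings along the fibre-forgetting projections $p^l\colon X^r\to X^{r-1}$.

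Part (a) is immediate from (\ref{vanishingbydimension}): the variety $X^r$ has dimension $r$, so both the classical and the arithmetic intersection of $n$ integrable bundles vanish unless $n\in\{r,r+1\}$. The first actual step is that \emph{any vertex of degree $0$ or $1$ forces $\langle\Gamma\rangle=0$}. If $\deg v_l=0$, all bundles are pullbacks along $p^l$, and the projection formula (\ref{delgine2}) applied to $p^l$ gives $0$. If $\deg v_l=1$, then the unique incident edge corresponds to some $\hat{\Delta}^{\alpha}_{jl}$ with $j\ne l$ (a loop would contribute degree $2$), all remaining bundles pull back along $p^l$, and (\ref{delgine2}) produces a factor $c_1(\hat{\Delta}^\alpha_{jl})[p^l]$; using $\deg(\alpha)=1$ and that $\Delta_{jl}$ restricts to a single point on each fibre of $p^l$, this fibre degree equals $1-1=0$. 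This vanishing is precisely the point of introducing the modification $\hat{\Delta}^\alpha$ in place of $\hat{\Delta}$.

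The heart of the argument is the \emph{contraction formula}: if $\deg v_l=2$ with incident edges corresponding to $\hat{\Delta}^\alpha_{j_1 l}$ and $\hat{\Delta}^\alpha_{l j_2}$ (with natural interpretations when one edge is a loop or $j_1=j_2$), then along $p^l$ there is an isometry
\begin{equation*}
\langle \hat{\Delta}^\alpha_{j_1 l},\hat{\Delta}^\alpha_{l j_2}\rangle_{p^l}\cong \hat{\Delta}^\alpha_{j_1 j_2}
\end{equation*}
of adelic line bundles on $X^{r-1}$ (the right-hand side being $-\hat{\omega}^\alpha$ for $j_1=j_2$). The corrections in (\ref{modification}) are tuned precisely so that both the underlying line bundle and the metrics match. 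Combined with (\ref{deligne1}) this rewrites $\langle\Gamma\rangle=\langle\Gamma'\rangle$ where $\Gamma'$ deletes $v_l$ and splices its two incident edges. Iterating, part (b) reduces to a disjoint union of single-vertex loop-graphs on $X$: each loop contributes the geometric degree $\deg(-\omega^\alpha)=-\deg(\omega+2\alpha)=-2g$, and (\ref{projection}) multiplies over the $b_0(\Gamma)$ connected components, giving $(-2g)^{b_0(\Gamma)}$. The second assertion of (b) (vanishing on non-cycle graphs) follows because the degree sum $\sum_l\deg v_l=2r$ forces any graph with a vertex of degree $>2$ to also contain one of degree $\le 1$.

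For part (c) with $n=r+1$, the degree sum $2(r+1)$ and the absence of degree-$\le 1$ vertices leaves exactly three topologies for the exceptional part of $\Gamma$: one vertex of degree $4$; or two vertices of degree $3$ joined by $1$ or by $3$ internally disjoint paths (the two-path case would force a degree-$\le 1$ vertex elsewhere). Iterated contraction reduces case (i) to $\langle(-\hat{\omega}^\alpha)^2\rangle$ on $X$; case (ii) to $\langle -p_1^*\hat{\omega}^\alpha,\hat{\Delta}^\alpha,-p_2^*\hat{\omega}^\alpha\rangle$ on $X^2$; and case (iii) to $\langle(\hat{\Delta}^\alpha)^3\rangle$ on $X^2$. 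These are evaluated by expanding via (\ref{modification}) and inserting the standard admissibility identities for $\hat\omega\cdot\hat\alpha$ and $\hat\alpha^2$ (which express them via $\hat\omega^2$ and $h_{NT}(x_\alpha)$), together with Zhang's formula for $\langle\hat{\Delta}^2\rangle$ on $X^2$ in terms of $\hat\omega^2$ and $\varphi(X)$; the latter is the only source of the $\varphi(X)$-term in case (iii). Matching constants with (i)--(iii) completes the proof. I expect the main obstacle to be the \emph{verification of the contraction isometry} at every place, particularly at archimedean ones where Green currents for admissible metrics must be compared; the cancellation of the $\pi^*\hat\alpha^2$-corrections is plausible from direct expansion of (\ref{modification}), but the pointwise matching of admissible metrics---carrying through the arithmetic corrections that ultimately produce the $h_{NT}(x_\alpha)$-terms---is where most of the technical work sits.
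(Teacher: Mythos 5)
Your proposal follows essentially the same route as the paper: vanishing for degree-$\le 1$ vertices via the fibre-degree computation, the contraction isometry $\langle\hat{\Delta}^{\alpha}_{j_1 l},\hat{\Delta}^{\alpha}_{l j_2}\rangle\cong\hat{\Delta}^{\alpha}_{j_1 j_2}$ for degree-$2$ vertices, multiplicativity over connected components, and reduction to the same four base graphs (loop, figure-eight, dumbbell, theta) evaluated with de Jong's formula $\langle\hat{\Delta},\hat{\Delta},\hat{\Delta}\rangle=\hat{\omega}^2-\varphi(X)$. The argument and its key lemmas match the paper's proof in substance (modulo the harmless slip of writing $\langle\hat{\Delta}^2\rangle$ for the triple product of the diagonal bundle).
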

We will prove the proposition by reduction steps on the graphs. For this purpose, we first need some lemmas.
\begin{Lem}\label{lemmacomponents}
	Let $\Gamma_1,\dots,\Gamma_{b_0(\Gamma)}$ be the connected components of $\Gamma$. Then we have
	$$\langle \Gamma\rangle=\prod_{j=1}^{b_0(\Gamma)} \langle \Gamma_j\rangle.$$
\end{Lem}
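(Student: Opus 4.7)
The plan is a straightforward induction on $b_0(\Gamma)$ based on the second projection formula (\ref{projection}). When $b_0(\Gamma)=1$ there is nothing to prove, so the essential case to handle is $\Gamma=\Gamma'\sqcup\Gamma''$, a disjoint union of two subgraphs with vertex sets $V'$ and $V''$ of sizes $r'$ and $r''$, and iterating yields the general claim.

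The crucial observation is combinatorial: because $V'$ and $V''$ share no vertices, every edge of $\Gamma$ has both endpoints in a common block, so every line bundle $\hat{\Delta}^{\alpha}_{jk}$ occurring in the tuple attached to $\Gamma$ has indices $j,k$ entirely in $V'$ or entirely in $V''$. After reordering the factors of $X^r$ so that $X^r\cong X^{V'}\times X^{V''}$, let $p$ and $q$ denote the two projections onto these factors; the definitions (\ref{deltajk}) then identify each bundle attached to an edge of $\Gamma'$ with the pullback $p^*\hat{\Delta}^{\alpha}_{jk}$ of the corresponding bundle on $X^{V'}$, and analogously the bundles attached to edges of $\Gamma''$ are pullbacks via $q$ from $X^{V''}$.

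The desired factorization $\langle\Gamma\rangle=\langle\Gamma'\rangle\langle\Gamma''\rangle$ is then an immediate application of (\ref{projection}) with $S=X^{V'}$ and $T=X^{V''}$, where the $n'$ edges of $\Gamma'$ and the $n''$ edges of $\Gamma''$ provide the two groups of pullbacks. When the edge counts satisfy $n'\in\{r',r'+1\}$ and $n''\in\{r'',r''+1\}$ with exactly one side being arithmetic, all three intersection numbers in the identity are genuinely defined and the projection formula yields the result directly. In the remaining, mismatched cases at least one of the factors $\langle\Gamma'\rangle$, $\langle\Gamma''\rangle$ vanishes by the dimension vanishing (\ref{vanishingbydimension}) on the corresponding product of curves, and combining (\ref{projectionformula}) with (\ref{vanishingbydimension}) one verifies that $\langle\Gamma\rangle$ vanishes as well, so the identity still holds.

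The main obstacle I anticipate is purely bookkeeping in this last case analysis, namely checking that a degree mismatch on one factor genuinely forces the whole left-hand side to vanish; no conceptual input beyond the projection formalism recalled in Section~\ref{secadelic} is required, so the lemma reduces entirely to the fact that edges of $\Gamma$ do not cross between connected components.
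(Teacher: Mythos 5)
Your proof is correct and takes essentially the same route as the paper, which simply observes that the claim is an immediate consequence of the product projection formula (\ref{projection}) once one notes that every edge lies within a single component, so the associated bundles are pullbacks from $X^{V'}$ respectively $X^{V''}$. Your extra bookkeeping about mismatched edge counts is a reasonable (and slightly more careful) elaboration of what the paper leaves implicit, since in the situations where the lemma is actually invoked at most one component carries the excess edge.
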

\begin{proof}
	This immediately follows from formula (\ref{projection}).
	 \end{proof}
\begin{Lem}\label{lemmavanishing}
	If $\Gamma$ has a vertex of degree $\le 1$, then $\langle \Gamma\rangle=0$.
\end{Lem}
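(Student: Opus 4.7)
The plan is to exploit the projection $p^l\colon X^r\to X^{r-1}$ that forgets the $l$-th factor, where $v_l$ denotes a vertex with $\deg v_l\le 1$. The key combinatorial observation is that every $\hat{\Delta}^{\alpha}_{j_ik_i}$ with $l\notin\{j_i,k_i\}$ is a pullback from $X^{r-1}$ via $p^l$, while, if $\deg v_l=1$, the unique remaining factor in the tuple $(\overline{\mathcal{M}}_1,\dots,\overline{\mathcal{M}}_n)$ associated to $\Gamma$ must be of the form $p_{lk}^*\hat{\Delta}^{\alpha}$ with $k\ne l$, since a loop at $v_l$ would force $\deg v_l=2$.

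If $\deg v_l=0$, all $n$ line bundles are pullbacks from $X^{r-1}$. Writing $X^r\cong X\times X^{r-1}$ with the first factor being the $l$-th one, the projection formula (\ref{projection}) gives
$$\langle\Gamma\rangle=\langle\,| X\rangle\cdot\langle\overline{\mathcal{M}}_1,\ldots,\overline{\mathcal{M}}_n| X^{r-1}\rangle,$$
and the empty intersection on $X$ vanishes by the convention recalled in Section \ref{secadelic}: an intersection of $r$ line bundles on a cycle of dimension $d_Z\notin\{r-1,r\}$ is zero, and here $r=0$ and $d_Z=\dim X=1\notin\{-1,0\}$. Hence $\langle\Gamma\rangle=0$.

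If $\deg v_l=1$, I would first verify from $\hat{\Delta}^{\alpha}=\hat{\Delta}-p_1^*\hat{\alpha}-p_2^*\hat{\alpha}+\pi_2^*\hat{\alpha}^2$ in (\ref{modification}) that $\hat{\Delta}^{\alpha}$ has degree zero on the fibres of either projection $p_1,p_2\colon X^2\to X$: the diagonal contributes a degree-one divisor, exactly one of the two $\hat{\alpha}$-pullbacks contributes a degree-one divisor, and the remaining two terms are trivial on the fibre, giving $1-1=0$. In particular $p_{lk}^*\hat{\Delta}^{\alpha}$ has fibrewise degree zero along $p^l$. Since $p^l$ has relative dimension one, the projection formula (\ref{delgine2}) applies with the single ``vertical'' line bundle $p_{lk}^*\hat{\Delta}^{\alpha}$ and the other $n-1$ factors as pullbacks, giving
$$\langle\Gamma\rangle=c_1(p_{lk}^*\hat{\Delta}^{\alpha})[p^l]\cdot\langle\overline{\mathcal{N}}_1,\ldots,\overline{\mathcal{N}}_{n-1}| X^{r-1}\rangle=0.$$

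The argument is essentially bookkeeping; the only mild technical points are interpreting the empty intersection in the case $\deg v_l=0$ via the dimension convention of Section \ref{secadelic}, and verifying the fibrewise-degree-zero property of $\hat{\Delta}^{\alpha}$ in the case $\deg v_l=1$, both of which are immediate from the setup and justify why the modified bundle $\hat{\Delta}^{\alpha}$ rather than $\hat{\Delta}$ is used throughout.
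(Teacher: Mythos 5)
Your argument is correct and follows essentially the same route as the paper: project via $p^l$ forgetting the low-degree vertex, note that all but at most one factor is a pullback, and kill the remaining factor because $\hat{\Delta}^{\alpha}$ has fibrewise degree zero (the paper's computation $c_1(\Delta^{\alpha}_{jk})[p^k]=c_1(p_{jk}^*\Delta)[p^k]-c_1(p_k^*\alpha)[p^k]=0$ is exactly your $1-1=0$). The only cosmetic difference is that the paper treats the degree-$0$ case uniformly via (\ref{delgine2}) (a pullback also has fibrewise degree zero) rather than invoking (\ref{projection}) with an empty factor.
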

\begin{proof}
	Let $(\overline{\mathcal{M}}_1,\dots,\overline{\mathcal{M}}_n)\in\mathfrak{L}_{\alpha,r}^{(n)}$ be an $n$-tuple associated to $\Gamma$ and let $v_k$ be a vertex of degree $\deg v_k\le 1$. After renaming, we may assume, that $e_1,\dots,e_{n-1}$ are not connected to $v_k$ and $e_n$ is no loop or it is also not connected to $v_k$. That means, if we factorize $\pi_r\colon X^r\to \mathrm{Spec}(K)$ by $\pi_r=\pi_{r-1}\circ p^k$, where $p^k\colon X^r\to X^{r-1}$ denotes the projection forgetting the $k$-th factor, there are integrable line bundles $\overline{\mathcal{M}}'_1,\dots,\overline{\mathcal{M}}'_{n-1}$ on $X^{r-1}$ with $\overline{\mathcal{M}}_j\cong p^{k *} \overline{\mathcal{M}}'_j$ for all $1\le j\le n-1$. Applying Equation (\ref{delgine2}) we obtain
	$$\langle \overline{\mathcal{M}}_1,\dots,\overline{\mathcal{M}}_n\rangle=c_1(\mathcal{M}_n)[p^k]\langle \overline{\mathcal{M}}'_1,\dots,\overline{\mathcal{M}}'_{n-1}\rangle.$$
	But $c_1(\mathcal{M}_n)[p^k]=0$, as $\overline{\mathcal{M}}_n$ is either isomorphic to some $p^{k *}\overline{\mathcal{M}}'_n$ or it is isomorphic to $\hat{\Delta}^{\alpha}_{jk}$ for some $j\neq k$ and
	$$c_1(\Delta^{\alpha}_{jk})[p^k]=c_1(p_{jk}^*\Delta)[p^k]-c_1(p_k^*\alpha)[p^k]=0.$$
	 \end{proof}
Let $v$ be a vertex of $\Gamma$ of degree $\deg v=2$ for which $(v,v)$ is no edge of $\Gamma$, and write $e_1=(v_1,v)$ and $e_2=(v,v_2)$ for the edges connected to $v$. We write $\Gamma/\lbrace v\rbrace$ for the graph obtained by removing $e_1, e_2$ and $v$ from $\Gamma$ and adding an edge $(v_1,v_2)$.
The next lemma shows, that the intersection number is stable under this contraction.
\begin{Lem}\label{lemmacontraction}
	Let $v$ be a vertex of $\Gamma$ of degree $\deg v=2$ for which $(v,v)$ is no edge of $\Gamma$. Then $\langle\Gamma\rangle=\langle \Gamma/\lbrace v\rbrace\rangle$.
\end{Lem}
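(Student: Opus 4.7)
The plan is to apply the projection formula (\ref{deligne1}) for Deligne pairings to the forgetful morphism $p^k\colon X^r\to X^{r-1}$ that drops the factor indexed by $v=v_k$. Let $(\overline{\mathcal{M}}_1,\dots,\overline{\mathcal{M}}_n)$ be a tuple associated to $\Gamma$, ordered so that $\overline{\mathcal{M}}_{n-1}=\hat{\Delta}^{\alpha}_{j_1 k}$ and $\overline{\mathcal{M}}_n=\hat{\Delta}^{\alpha}_{k j_2}$ correspond to the two edges $e_1,e_2$ meeting $v$. Since $v$ carries no loop, $j_1,j_2\neq k$, and every remaining edge of $\Gamma$ avoids $v_k$, so $\overline{\mathcal{M}}_i=(p^k)^*\overline{\mathcal{N}}_i$ for $1\le i\le n-2$, where the $\overline{\mathcal{N}}_i$ are exactly the line bundles associated to the unchanged edges of $\Gamma/\{v\}$. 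Formula (\ref{deligne1}) then rewrites
$$\langle\Gamma\rangle=\langle\overline{\mathcal{N}}_1,\dots,\overline{\mathcal{N}}_{n-2},\langle\hat{\Delta}^{\alpha}_{j_1 k},\hat{\Delta}^{\alpha}_{k j_2}\rangle_{p^k}\rangle,$$
where $\langle\cdot,\cdot\rangle_{p^k}$ denotes the Deligne pairing along the relative dimension one morphism $p^k$. The lemma is therefore equivalent to the isometry of adelic line bundles on $X^{r-1}$
$$\langle\hat{\Delta}^{\alpha}_{j_1 k},\hat{\Delta}^{\alpha}_{k j_2}\rangle_{p^k}\cong\hat{\Delta}^{\alpha}_{j_1 j_2},$$
with the right-hand side interpreted via (\ref{deltajk}): it is $p_{j_1 j_2}^*\hat{\Delta}^{\alpha}$ when $j_1\neq j_2$ and $-p_{j_1}^*\hat{\omega}^{\alpha}$ when $j_1=j_2$, which is precisely the line bundle attached to the new edge of $\Gamma/\{v\}$.

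By further factoring $p^k$ through the projection onto only the factors involved, this displayed isometry reduces to two base cases on low-dimensional self-products:
$$\langle p_{12}^*\hat{\Delta}^{\alpha},p_{23}^*\hat{\Delta}^{\alpha}\rangle_{p_{13}}\cong\hat{\Delta}^{\alpha}\text{ on }X^2,\qquad \langle\hat{\Delta}^{\alpha},\hat{\Delta}^{\alpha}\rangle_{p_1}\cong-\hat{\omega}^{\alpha}\text{ on }X.$$
Expanding $\hat{\Delta}^{\alpha}=\hat{\Delta}-p_1^*\hat{\alpha}-p_2^*\hat{\alpha}+\pi_2^*\hat{\alpha}^2$ and using bilinearity, each side breaks into terms of three kinds: Deligne pairings of pullbacks from the base (which collapse by (\ref{delgine2}) since the relevant fiber degree is zero), mixed pairings $\langle f^*\hat{L},\hat{N}\rangle_f\cong\hat{L}^{\otimes\deg_f\hat{N}}$ which collapse to pullbacks, and finally the genuine self-pairings of $\hat{\Delta}$ over the middle factor. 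Since $p_{12}^*\hat{\Delta}$, $p_{23}^*\hat{\Delta}$, and $p_2^*\hat{\alpha}$ all have fiber degree one, a direct accounting shows the $\hat{\alpha}$-contributions recombine exactly into the $\alpha$-modifications appearing in $\hat{\Delta}^{\alpha}$ and $\hat{\omega}^{\alpha}$.

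The genuine content is then the pair of unmodified identities $\langle p_{12}^*\hat{\Delta},p_{23}^*\hat{\Delta}\rangle_{p_{13}}\cong\hat{\Delta}$ on $X^2$ and $\langle\hat{\Delta},\hat{\Delta}\rangle_{p_1}\cong-\hat{\omega}$ on $X$. At the level of underlying line bundles these are classical: the first follows because the scheme-theoretic intersection $\{x=y\}\cap\{y=z\}\subset X^3$ is the triple diagonal, which is mapped isomorphically onto the diagonal of $X^2$ by $p_{13}$, and the second is the adjunction identity $\Delta|_{\Delta}\cong\omega^{-1}$. The main obstacle is matching the admissible adelic metrics on both sides. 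For this one uses the definition of the admissible metric on $\hat{\Delta}$ through the isometry $2\hat{\Delta}+p_1^*\hat{\omega}+p_2^*\hat{\omega}\cong 2 f_{(1,-1),\alpha}^*\hat{\mathcal{L}}$ together with the compatibility of Deligne pairings with metric pullbacks, or equivalently one verifies the identity fiberwise via the recursive formula (\ref{delignerecursive}) and the characterization of the admissible Green's function at each Archimedean and non-Archimedean place.
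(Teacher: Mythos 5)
Your proposal follows essentially the same route as the paper: push the intersection down along the forgetful projection $p^k$ via (\ref{deligne1}), reduce to the single Deligne-pairing identity $\langle\hat{\Delta}^{\alpha}_{j_1k},\hat{\Delta}^{\alpha}_{kj_2}\rangle_{p^k}\cong\hat{\Delta}^{\alpha}_{j_1j_2}$ split into the cases $j_1\neq j_2$ and $j_1=j_2$, and verify it by bilinear expansion, the classical identifications of $\langle\hat{\Delta},\hat{\Delta}\rangle$ and $\langle p_{12}^*\hat{\Delta},p_{23}^*\hat{\Delta}\rangle$, and the vanishing of the integral in (\ref{delignerecursive}) coming from fibrewise admissibility. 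The paper's proof is the same argument with the bookkeeping of the $\hat{\alpha}$-terms carried out term by term, so your proposal is correct and not a genuinely different approach.
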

\begin{proof}
	We choose an $n$-tuple $(\overline{\mathcal{M}}_1,\dots,\overline{\mathcal{M}}_n)\in\mathfrak{L}_{\alpha,r}^{(n)}$ associated to $\Gamma$, such that $v$ corresponds to $v_r$ in $\Gamma_r(\overline{\mathcal{M}}_{1},\dots,\overline{\mathcal{M}}_{n})$ and $\overline{\mathcal{M}}_{n-1}$ and $\overline{\mathcal{M}}_{n}$ correspond to the edges connected to $v$.
	We again factorize $\pi_r\colon X^r\to \mathrm{Spec}(K)$ by $\pi_r=\pi_{r-1}\circ p^r$ and choose $\overline{\mathcal{M}}'_j$ such that $\overline{\mathcal{M}}_j\cong p^{r *}\overline{\mathcal{M}}'_j$ for all $0\le j\le n-2$. By Equation (\ref{deligne1}) we obtain
	$$\langle\overline{\mathcal{M}}_1,\dots,\overline{\mathcal{M}}_{n}\rangle=\langle\overline{\mathcal{M}}'_0,\dots,\overline{\mathcal{M}}'_{n-2},\langle\hat{\Delta}^{\alpha}_{jr},\hat{\Delta}^{\alpha}_{rl}\rangle \rangle,$$
	for some $j,l$ different from $r$.
	Hence, it remains to compute the Deligne pairing $\langle\hat{\Delta}^{\alpha}_{jr},\hat{\Delta}^{\alpha}_{rl}\rangle$ with respect to $p^r$. As we already have seen in the proof of Lemma \ref{lemmavanishing}, $c_1(\overline{\mathcal{M}}_n)[p^r]$ vanishes. Hence, we have only to consider the non-constant part of $\hat{\Delta}^{\alpha}_{jr}$ and $\hat{\Delta}^{\alpha}_{rl}$ with respect to $p^r$:
	$$\langle\hat{\Delta}^{\alpha}_{jr},\hat{\Delta}^{\alpha}_{rl}\rangle=\left\langle p_{jr}^*\hat{\Delta}-p_r^*\hat{\alpha},p_{rl}^*\hat{\Delta}-p_r^*\hat{\alpha}\right\rangle.$$
	Let us first assume $j\neq l$.
	If we apply Equation (\ref{delignerecursive}) to the canonical section $p_{rl}^*1_{\Delta}$ of $p_{rl}^*\hat{\Delta}$, we obtain $\langle p_{jr}^*\hat{\Delta},p_{rl}^*\hat{\Delta}\rangle=p_{jl}^*\hat{\Delta}$,
	since the integral vanishes as $p_{rl}^*\hat{\Delta}$ is fibrewise admissible with respect to $p^r$.
	Similarly, we obtain
	$\langle p_r^*\hat{\alpha}, p_{jr}^*\hat{\Delta}\rangle=p_j^*\hat{\alpha}$.
	Since Deligne pairings commute with base change, we have $\langle p_r^*\hat{\alpha},p_r^*\hat{\alpha}\rangle=\pi_{r-1}^*\hat{\alpha}^2$.
	Putting everything together yields
	$$\left\langle p_{jr}^*\hat{\Delta}-p_r^*\hat{\alpha},p_{rl}^*\hat{\Delta}-p_r^*\hat{\alpha}\right\rangle=p_{jl}^*\hat{\Delta}-p_j^*\hat{\alpha}-p_l^*\hat{\alpha}+\pi_{r-1}^*\hat{\alpha}^2=\hat{\Delta}^{\alpha}_{jl}.$$
	
	Now we consider the case $j=l$. We can again apply Equation (\ref{delignerecursive}) to obtain
	$$\langle p_{jr}^*\hat{\Delta},p_{jr}^*\hat{\Delta}\rangle=p_{j}^*\langle\hat{\Delta},\hat{\Delta}\rangle=p_{j}^*(s^*\hat{\Delta})=-p_j^*\hat{\omega},$$
	where $s\colon X\to X^2$ denotes the embedding of the diagonal. As above, we compute
	$$\left\langle p_{jr}^*\hat{\Delta}-p_r^*\hat{\alpha},p_{jr}^*\hat{\Delta}-p_r^*\hat{\alpha}\right\rangle=-p_{j}^*\hat{\omega}-2p_j^*\hat{\alpha}+\pi_{r-1}^*\hat{\alpha}^2=\hat{\Delta}^{\alpha}_{jj}.$$
	This proves the lemma.
	 \end{proof}
The next lemma computes $\langle \Gamma\rangle$ for the cases, which will remain after reductions.
\begin{table}
	\caption{Intersection numbers for some graphs.}\label{tab1}
	\begin{tabular}{|c|c|c|}
		\hline
		&$\Gamma$ & $\langle\Gamma\rangle$\\ \hline
		(a)&$\xygraph{!{(0,0)}*+{\bullet}="a"  "a"-@`{p+(1,1.5),p+(1,-1.5)} "a"}$ & $-2g$\\ \hline
		(b)&$\xygraph{!{(0,0)}*+{\bullet}="a" "a"-@`{p+(-1,1.5),p+(-1,-1.5)} "a" "a"-@`{p+(1,1.5),p+(1,-1.5)} "a"}$ & $\frac{g}{g-1}\hat{\omega}^2+4(g-1)h_{NT}\left(x_\alpha\right)$ \\ \hline
		(c)&$\xygraph{!{(0,0)}*+{\bullet}="a" !{(1.0,0)}*+{\bullet}="b" "a"-@`{p+(-1,1.5),p+(-1,-1.5)} "a" "a"-"b" "b"-@`{p+(1,1.5),p+(1,-1.5)} "b" }$ & $-4(g-1)^2 h_{NT}\left(x_\alpha\right)$ \\ \hline  & & \\
		(d)&$\xygraph{!{(0,0)}*+{\bullet}="a" !{(1.5,0)}*+{\bullet}="b" "a"-@/^0.6cm/"b" "a"-"b" "a"-@/_0.6cm/"b" }$ & $\frac{2g+1}{2g-2}\hat{\omega}^2-\varphi(X)+6(g-1)h_{NT}\left(x_\alpha\right)$\\ & & \\\hline
	\end{tabular}
\end{table}
\begin{Lem}\label{lemmaremainingcases}
	The intersection numbers for the graphs $\Gamma$ in Table \ref{tab1} are given as in the table.
\end{Lem}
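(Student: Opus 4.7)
The approach is to expand each modified bundle via (\ref{modification}) into the primitive pieces $\hat\omega,\hat\alpha,\hat\Delta,\pi^*\hat\alpha^2$, apply multilinearity of the arithmetic intersection, and reduce every summand to one of the three invariants $\hat\omega^2$, $\varphi(X)$, $h_{NT}(x_\alpha)$ using the projection formulas (\ref{projection})--(\ref{delgine2}) together with the recursive formula (\ref{delignerecursive}) for Deligne pairings. Two identities for admissible metrics on $X$ do most of the arithmetic: the defining $\hat{x}_\alpha^2=-h_{NT}(x_\alpha)$ recalled in the introduction, and $\hat\omega\cdot\hat{x}_\alpha=0$ because $\hat{x}_\alpha$ is admissible of degree zero. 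Writing $\alpha=x_\alpha+\omega/(2g-2)$ these give
$$\hat\omega\cdot\hat\alpha=\frac{\hat\omega^2}{2g-2},\qquad \hat\alpha^2=\frac{\hat\omega^2}{(2g-2)^2}-h_{NT}(x_\alpha),$$
making every pairing among $\hat\omega,\hat\alpha,\pi^*\hat\alpha^2$ on $X$ explicit; together with Lemma \ref{lemmacontraction}'s fibrewise identity $\langle\hat\Delta,\hat\Delta\rangle_{X^2/X}=-\hat\omega$ one also obtains $c_1(\Delta)^2[X^2]=2-2g$.

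Part (a) is the geometric degree $-\deg(\omega^\alpha)=-2g$, since $\pi^*\hat\alpha^2$ has trivial underlying line bundle. For (b), expanding $(\hat\omega+2\hat\alpha-\pi^*\hat\alpha^2)^2$ and using $(\pi^*\hat\alpha^2)^2=0$, $\hat\omega\cdot\pi^*\hat\alpha^2=(2g-2)\hat\alpha^2$, $\hat\alpha\cdot\pi^*\hat\alpha^2=\hat\alpha^2$ reduces the self-intersection to $\hat\omega^2+4\hat\omega\cdot\hat\alpha-4(g-1)\hat\alpha^2$; substituting the two identities yields $\tfrac{g}{g-1}\hat\omega^2+4(g-1)h_{NT}(x_\alpha)$.

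For (c), the intersection $\langle p_1^*\hat\omega^\alpha,p_2^*\hat\omega^\alpha,p_{12}^*\hat\Delta^\alpha\rangle$ is handled by expanding $p_{12}^*\hat\Delta^\alpha=p_{12}^*\hat\Delta-p_1^*\hat\alpha-p_2^*\hat\alpha+\pi_2^*\hat\alpha^2$. The three pullback/constant summands factor through (\ref{projection})--(\ref{delgine2}) into products of computations on $X$ already available from (a) and (b). For the diagonal summand, applying (\ref{delignerecursive}) with the canonical section $1_\Delta$ of $\hat\Delta$ produces a restriction term and an error integral $\int\log\|1_\Delta\|_v\,c_1(p_1^*\omega^\alpha)\wedge c_1(p_2^*\omega^\alpha)$ at every place; since both curvature forms are constant multiples of the admissible measure $\mu$ and $\int g_{a,v}(z,w)\mu(z)\mu(w)=0$ by the defining orthogonality of $g_{a,v}$ (with an analogous cancellation at finite places), this integral vanishes. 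The diagonal summand therefore collapses to $\langle\hat\omega^\alpha,\hat\omega^\alpha\rangle_X$ from (b), and collecting the four contributions leaves $-4(g-1)^2 h_{NT}(x_\alpha)$ after the $\hat\omega^2$-coefficients cancel algebraically.

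For (d), write $\hat\Delta^\alpha=\hat\Delta-P$ with $P=p_1^*\hat\alpha+p_2^*\hat\alpha-\pi_2^*\hat\alpha^2$ a sum of factor-pullbacks and expand $(\hat\Delta-P)^3$. Every term of $3\hat\Delta P^2$ and $P^3$ contains at least two factor-pullback/constant bundles, so the orthogonality argument of (c) applies (or the geometric $c_1$-product vanishes); a short calculation gives $\hat\Delta P^2=0$ and $P^3=0$, and $3\hat\Delta^2 P$ reduces via (\ref{deligne1}) and Lemma \ref{lemmacontraction} to an explicit expression in $\hat\omega^2$ and $\hat\alpha^2$. The decisive term is $\hat\Delta^3$: applying (\ref{delignerecursive}) with $1_\Delta$ to one copy of $\hat\Delta$ gives
$$\langle \hat\Delta,\hat\Delta,\hat\Delta\rangle_{X^2}=\langle s^*\hat\Delta,s^*\hat\Delta\rangle_X+\sum_{v\in M(K)}\int_{X_v^2}\log\|1_\Delta\|_v\,c_1(\hat\Delta)^2,$$
but $c_1(\hat\Delta)$ is \emph{not} of product type, so the previous orthogonality no longer kills the integral. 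By Zhang's construction in \cite[Theorem~1.3.1]{Zha10} this sum of integrals equals $-\varphi(X)$, hence $\hat\Delta^3=\hat\omega^2-\varphi(X)$; substituting yields the claimed $\tfrac{2g+1}{2g-2}\hat\omega^2-\varphi(X)+6(g-1)h_{NT}(x_\alpha)$. The main obstacle is precisely this identification of $\sum_v\int\log\|1_\Delta\|_v\,c_1(\hat\Delta)^2$ with $-\varphi(X)$: the diagonal contribution to $c_1(\hat\Delta)$ destroys the product orthogonality that eliminated all integral corrections in (a)--(c), and the residual integrals are by definition the local invariants $\varphi(X_v)$ and $\varphi(X_\sigma)$ of Zhang.
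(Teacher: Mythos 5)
Your proof is correct and follows essentially the same route as the paper: expand the modified bundles via (\ref{modification}), reduce with the Deligne-pairing and projection formulas together with the admissible-pairing identities $\hat{x}_\alpha^2=-h_{NT}(x_\alpha)$ and $\hat\omega\cdot\hat{x}_\alpha=0$, and feed in the single non-formal input $\langle\hat\Delta,\hat\Delta,\hat\Delta\rangle=\hat\omega^2-\varphi(X)$, which the paper takes from \cite[Proposition~7.1]{dJo18}. The one caveat is your claim that the residual integral in case (d) equals $-\varphi(X)$ ``by definition'' of Zhang's local invariants: $\varphi(X_v)$ is defined spectrally at archimedean places and via the polarized reduction graph at finite places, so identifying it with $\int\log\|1_\Delta\|_v\,c_1(\hat\Delta)^2$ is a theorem (precisely de Jong's Proposition~7.1, resting on Zhang's Gross--Schoen computation) rather than a definition --- but since you point to the correct source, the argument stands.
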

\begin{proof}
	\begin{enumerate}[(a)]
		\item
		We have $\langle \Gamma\rangle=\deg \left(-\omega -2\alpha\right)=-2g$.
		\item
		Since $\Gamma$ is isomorphic to $\Gamma_1(-\hat{\omega}^{\alpha},-\hat{\omega}^{\alpha})$, we obtain
		\begin{align*}
		\langle \Gamma\rangle&=\left\langle\hat{\omega}+2\hat{\alpha}-\pi^*\hat{\alpha}^2,\hat{\omega}+2\hat{\alpha}-\pi^*\hat{\alpha}^2\right\rangle,
		\end{align*}
		such that the assertion follows by a direct computation.
		\item
		As $\Gamma$ is isomorphic to $\Gamma_2(-p_1^*{\hat{\omega}}^{\alpha},\hat{\Delta}^{\alpha},-p_2^*{\hat{\omega}}^{\alpha})$, the intersection number $\langle \Gamma\rangle $ is given by
		$$\left\langle -p_1^*\hat{\omega}-2p_1^*\hat{\alpha}+\pi_2^*\hat{\alpha}^2, \hat{\Delta}-p_1^*\hat{\alpha}-p_2^*\hat{\alpha}+\pi_2^*\hat{\alpha}^2,-p_2^*\hat{\omega}-2p_2^*\hat{\alpha}+\pi_2^*\hat{\alpha}^2\right\rangle.$$
		To compute this, we proceed as in the proof of Lemma \ref{lemmacontraction}. By factorizing $\pi_2=\pi\circ p^2$ and using Equation (\ref{deligne1}) we can first multiply the last two factors, where we have $\langle\hat{\Delta},p_2^*\hat{\omega}\rangle=\hat{\omega}$ and $\langle\hat{\Delta},p_2^*\hat{\alpha}\rangle=\hat{\alpha}$ as in the proof of Lemma \ref{lemmacontraction}. This gives
		$$\left\langle-\hat{\omega}-2\hat{\alpha}+\hat{\alpha}^2,-\hat{\omega}+(2g-2)(\hat{\alpha}-\hat{\alpha}^2)+\langle\hat{\alpha},\hat{\omega}\rangle\right\rangle$$
		and by a direct computation we obtain the assertion.
		\item
		Since $\Gamma$ is isomorphic to $\Gamma_2(\hat{\Delta}^{\alpha},\hat{\Delta}^{\alpha},\hat{\Delta}^{\alpha})$, the intersection number $\langle \Gamma\rangle $ is given by
		$$\left\langle \hat{\Delta}-p_1^*\hat{\alpha}-p_2^*\hat{\alpha}+\pi_2^*\hat{\alpha}^2, \hat{\Delta}-p_1^*\hat{\alpha}-p_2^*\hat{\alpha}+\pi_2^*\hat{\alpha}^2,\hat{\Delta}-p_1^*\hat{\alpha}-p_2^*\hat{\alpha}+\pi_2^*\hat{\alpha}^2\right\rangle.$$
		Computing as before, we obtain
		$$\langle \Gamma\rangle=\langle\hat{\Delta},\hat{\Delta},\hat{\Delta}\rangle+6(g-1)h_{NT}\left(\alpha-\frac{\omega}{2g-2}\right)+\frac{3\hat{\omega}^2}{2g-2}.$$
		It was shown in \cite[Proposition~7.1]{dJo18}, that $\langle\hat{\Delta},\hat{\Delta},\hat{\Delta}\rangle=\hat{\omega}^2-\varphi(X)$. Hence, we obtain the assertion.
	\end{enumerate}
	 \end{proof}
Now we can prove Proposition \ref{graphintersectionnumber}.
\begin{proof}[Proof of Proposition \ref{graphintersectionnumber}]
	\begin{enumerate}[(a)]
		\item
		This directly follows, as $\langle\overline{\mathcal{M}}_1,\dots,\overline{\mathcal{M}}_n|X^r\rangle$ vanishes if $r\notin\lbrace n-1,n\rbrace$.
		\item
		By Lemma \ref{lemmacontraction} a circle has the same intersection number as a loop, which has intersection number $-2g$ by Lemma \ref{lemmaremainingcases}. Hence, a collection of $b_0$ circles has intersection number $(-2g)^{b_0}$ by Lemma \ref{lemmacomponents}.
		If there is a vertex of degree $\neq 2$, then there has to be a vertex of degree $\le 1$, since $n=r$. Then the intersection number vanishes by Lemma \ref{lemmavanishing}.
		\item
		If we are in case (iv), there has to exist a vertex of degree $\le 1$. Hence, the intersection number $\langle \Gamma\rangle$ vanishes. Therefore, we may assume that there is no vertex of degree $\le 1$.
		If we decompose $\Gamma$ into its connected components and contract all degree $2$ vertices $v$ for which $(v,v)$ is no edge of $\Gamma$, we obtain $b_0(\Gamma)-1$ loops and one graph of the form (b), (c) or (d) in Table \ref{tab1}. By Lemmas \ref{lemmacomponents}, \ref{lemmacontraction} and \ref{lemmaremainingcases} we obtain the intersection number as in the proposition.
	\end{enumerate}
	 \end{proof}
\section{Proof of Theorem \ref{mainthm}}\label{secproofmain}
We will prove Theorem \ref{mainthm} in this section.
Let  $\overline{\mathcal{M}}_1,\dots,\overline{\mathcal{M}}_{r+1}$ be adelic $\mathbb{Q}$-line bundles on $X^r$ of the form
$$\overline{\mathcal{M}}_l=\tfrac{1}{2}\sum_{j,k=1}^r t_{l,j,k}\hat{\Delta}^{\alpha}_{jk}=\tfrac{1}{2}\sum_{j=1}^r t_{l,j,j}\hat{\Delta}^{\alpha}_{jj}+\sum_{j<k}^r t_{l,j,k}\hat{\Delta}^{\alpha}_{jk},$$
where $t_{l,j,k}\in\mathbb{Q}$ are rational numbers satisfying $t_{l,j,k}=t_{l,k,j}$ for all $l,j,k$.

We first prove (a). By multi-linearity we can expand $\langle \mathcal{M}_1,\dots,\mathcal{M}_{r}\rangle$ as a linear combination of intersection numbers of graphs as in the previous section. 
We have to count with coefficients the graphs consisting only of circles. For this purpose, let $\Pi_r$ be the set of all partitions of $\lbrace 1,\dots, r\rbrace$.
To any graph $\Gamma$ with vertex set isomorphic to $\lbrace 1,\dots, r\rbrace$ we associate the partition $\pi_\Gamma\in\Pi_r$ induced by the connected components of $\Gamma$. If $\Gamma$ only consists of circles, its intersection number is given by $\langle\Gamma\rangle=(-2g)^{|\pi_\Gamma|}$ and hence, it only depends on $\pi_\Gamma$. Fix some $\pi\in\Pi_r$ and $B\in\pi$. To build a circle of the elements in $B$, we choose an isomorphism $\sigma\colon\mathbb{Z}/|B|\xrightarrow{\sim}B$. Of course, neither the starting point $\sigma(0)$ nor the direction is a datum of the circle, such that by symmetry we get every circle $2|B|$-times if $|B|\ge 3$, and $|B|$-times else. 

Thus, the coefficient corresponding to the partition $\pi$ in the expansion of the intersection number $\langle\overline{M}_1,\dots,\overline{\mathcal{M}}_{r}\rangle$ is given by
\begin{align}\label{partition}
\sum_{\tau\in\mathcal{S}_r} \prod_{B\in\pi}\frac{1}{2|B|}\sum_{\sigma\colon\mathbb{Z}/|B|\xrightarrow{\sim}B}\prod_{j=0}^{|B|-1}t_{\tau(\sigma(j)),\sigma(j),\sigma(j+1)}.
\end{align}
Note that we have to sum over the symmetric group $\mathcal{S}_r$ of $\lbrace 1,\dots, r\rbrace$, since fixing the graph $\Gamma$ determines the corresponding $n$-tuple of line bundles only up to order. By the factor $\frac{1}{2}$ in the definition of $\overline{\mathcal{M}}_l$, we get the $2$ in the denominator also if $|B|=1$. If $|B|=2$ we obtain the $2$ in the denominator, since in this case the two line bundles corresponding to the circle associated to $B$ are equal, such that we have to divide by $2$ after summing over all permutation $\tau\in\mathcal{S}_{r}$ of the line bundles.

If we multiply (\ref{partition}) with the intersection number $(-2g)^{|\pi|}$ of the corresponding graphs and sum over all partitions $\pi\in\Pi_r$, we obtain the formula in the theorem. This proves (a).

To prove (b), we may again expand $\langle \overline{\mathcal{M}}_1,\dots,\overline{\mathcal{M}}_{r+1}\rangle$ as a linear combination of intersection numbers of graphs. Now we have to count with coefficients the graphs of the form as in (i), (ii) and (iii) of Proposition \ref{graphintersectionnumber} (c). We consider the three different types separately for a fixed partition $\pi\in\Pi_r$.
\begin{enumerate}[(i)]
	\item
	We have to build circles of the elements of $B$ for all $B\in \pi$ except for one. Denote this one by $B'$. We have to build a connected graph with a vertex of degree $4$ and all other vertices of degree $2$ of the elements of $B'$. We do this by fixing a $k\in B'$, which will be the vertex of degree $4$, and choosing an isomorphism $\sigma \colon \mathbb{Z}/|B'|\xrightarrow{\sim}B'$.
	The graph is obtained by connecting $\sigma(j)$ and $\sigma(j+1)$ by an edge for every $1\le j\le |B'|-1$ and connecting $\sigma(0)$ and $k$ by an edge and $\sigma(1)$ and $k$ by another edge. If $|B'|=11$ and $k=\sigma(6)$, this looks as follows:
	$$\xygraph{!{(0,0)}*+!(.15,0){k \bullet}="f" !{(-1,1)}*+!(-.3,0){\bullet\sigma(1)}="a" !{(-2,1)}*+!(.3,0){\sigma(2) \bullet}="b" !{(-3,0)}*+!(.3,0){\sigma(3)\bullet}="c" !{(-2,-1)}*+!(.3,0){\sigma(4) \bullet}="d" !{(-1,-1)}*+!(-.3,0){\bullet\sigma(5)}="e" !{(1,1)}*+!(.3,0){\sigma(7)\bullet}="g" !{(2,1)}*+!(-.3,0){\bullet\sigma(8)}="h" !{(3,0)}*+!(-.3,0){\bullet\sigma(9)}="i" !{(2,-1)}*+!(-.3,0){\bullet\sigma(10)}="j" !{(1,-1)}*+!(.3,0){\sigma(0)\bullet}="k" "a"-"b" "b"-"c" "c"-"d" "d"-"e" "e"-"f" "f"-"g" "g"-"h" "h"-"i" "i"-"j" "j"-"k" "a"-@{--}"f" "f"-@{--}"k"}.$$
	
	The graph does not care about the direction and we can flip the direction of the left side respectively the right side. Hence, we obtain every graph $8$ times if both circles consists of at least $3$ vertices. If we have one circle with at least $3$ vertices and the other circle has less than $3$ vertices, we obtain the graph $4$ times. If both circles have less than $3$ vertices but we have at least $2$ vertices in all, we obtain the graph $2$ times. Finally, we obtain the graph consisting of two loops exactly once.
	Similarly to the proof of (a), the coefficient corresponding to the partition $\pi$ in the expansion of the intersection number $\langle\overline{\mathcal{M}}_1,\dots,\overline{\mathcal{M}}_{r+1}\rangle$ is given by
	\begin{align*}
	&\sum_{\tau\in\mathcal{S}_{r+1}}\sum_{B'\in\pi}\left(\prod_{B\in \pi\setminus\lbrace B'\rbrace}\frac{1}{2|B|}\sum_{\sigma\colon \mathbb{Z}/|B|\xrightarrow{\sim}B}\prod_{j=0}^{|B|-1}t_{\tau(\sigma(j)),\sigma(j),\sigma(j+1)}\right)\times\\
	&\frac{1}{8}\sum_{k\in B'}\sum_{\sigma\colon \mathbb{Z}/|B'|\xrightarrow{\sim} B'}t_{\tau(\sigma(0)),\sigma(0),k}t_{\tau(r+1),\sigma(1),k}\prod_{j=1}^{|B'|-1}t_{\tau(\sigma(j)),\sigma(j),\sigma(j+1)}.
	\end{align*}
	We have to explain, why we uniformly get the factor $\frac{1}{8}$. By the factor $\frac{1}{2}$ in the definition of $\overline{\mathcal{M}}$, we obtain an additional factor $2$ in the denominator for every loop in the graph associated to $B'$. Further, we have to divide by $2$ whenever the tuple of line bundles corresponding to the graph associated to $B'$ contains two equal line bundles, since we summed over all permutations $\tau\in\mathcal{S}_{r+1}$. Going through the cases mentioned above, one checks that we always have to divide by $8$.
	
	If we multiply with the intersection number of the graph, which only depends on the partition associated to the graph, and sum over all partitions, we obtain by Proposition \ref{graphintersectionnumber}
	\begin{align}\label{part1}
	\frac{c_1}{8}\left(\frac{g}{g-1}\hat{\omega}^2+4(g-1) h_{NT}\left(x_\alpha\right)\right)
	\end{align}
	with $c_1$ as in Theorem \ref{mainthm}.
	\item
	The other two cases are similar to (i) and we try to give only the differences. We again fix a $B'\in \pi$ and we build a graph with two vertices of degree $3$, which are connected by exactly $1$ path, and all other vertices have degree $2$. We choose again an isomorphism $\sigma\colon \mathbb{Z}/|B'|\xrightarrow{\sim}B'$. Further, we fix two integers $1\le j<k\le|B'|$.
	The graph is obtained by connecting $\sigma(l)$ and $\sigma(l+1)$ for all $1\le l\le |B'|-1$ and connecting $\sigma(0)$ to $\sigma(k)$ and $\sigma(1)$ to $\sigma(j)$. If $|B'|=11$, $j=4$ and $k=8$, this looks as follows:
	
	\begin{align*}
	\xygraph{!{(3,1)}*+!(-.3,0){\bullet \sigma(0)}="k" !{(-3,1)}*+!(-.3,0){\bullet\sigma(1)}="a" !{(-3,0)}*+!(.3,0){\sigma(2) \bullet}="b" !{(-3,-1)}*+!(-.3,0){\bullet \sigma(3)}="c" !{(-2,0)}*+!(.3,0){\sigma(4) \bullet}="d" !{(-1,0)}{\bullet\sigma(5)}="e" !{(0,0)}{\bullet\sigma(6)}="f" !{(1,0)}{\bullet\sigma(7)}="g" !{(2,0)}*+!(-.3,0){\bullet \sigma(8)}="h" !{(3,-1)}*+!(-.3,0){\bullet\sigma(9)}="i" !{(3,0)}*+!(-.3,0){\bullet\sigma(10)}="j" "a"-"b" "b"-"c" "c"-"d" "d"-"e" "e"-"f" "f"-"g" "g"-"h" "h"-"i" "i"-"j" "j"-"k" "a"-@{--}"d" "h"-@{--}"k"}.
	\end{align*}
	We again obtain every graph $8$ times, $4$ times or twice, depending on the cardinality of the vertices in the two circles. But as above by the factor $\frac{1}{2}$ in the definition of $\overline{\mathcal{M}}_l$ and after summing over all permutations $\tau\in\mathcal{S}_{r+1}$ of the line bundles in the intersection product of the graph, we can uniformly write
	\begin{align*}
	&\frac{1}{8}\sum_{\tau\in\mathcal{S}_{r+1}}\sum_{B'\in\pi}\left(\prod_{B\in \pi\setminus\lbrace B'\rbrace}\frac{1}{2|B|}\sum_{\sigma\colon \mathbb{Z}/|B|\xrightarrow{\sim}B}\prod_{j=0}^{|B|-1}t_{\tau(\sigma(j)),\sigma(j),\sigma(j+1)}\right)\times\\
	&\sum_{1\le j<k\le |B'|}\sum_{\sigma\colon \mathbb{Z}/|B'|\xrightarrow{\sim} B'}t_{\tau(\sigma(0)),\sigma(0),\sigma(k)}t_{\tau(r+1),\sigma(1),\sigma(j)}\prod_{j=1}^{|B'|-1}t_{\tau(\sigma(j)),\sigma(j),\sigma(j+1)}
	\end{align*}
	for the coefficient corresponding to the partition $\pi$ in the expansion of the intersection number $\langle\overline{\mathcal{M}}_1,\dots,\overline{\mathcal{M}}_{r+1}\rangle$.
	Multiplying with the intersection number of the corresponding graphs and summing over all partitions yields
	\begin{align}\label{part2}
	-\frac{c_2(g-1)^2}{2} h_{NT}\left(x_\alpha\right),
	\end{align}
	with $c_2$ as in Theorem \ref{mainthm}.
	\item
	In this case we have to build a graph of the elements of $B'$, which has two vertices of degree $3$, connected by $3$ different paths, and all other vertices have degree $2$ . We again fix $1\le j<k\le|B'|$ and an isomorphism $\sigma\colon \mathbb{Z}/|B'|\xrightarrow{\sim}B'$. The graph is obtained by connecting $\sigma(l)$ and $\sigma(l+1)$ for every $1\le l\le |B'|-1$ and connecting $\sigma(0)$ to $\sigma(j)$ and $\sigma(1)$ to $\sigma(k)$. For $|B'|=11$, $j=4$ and $k=8$, this looks as follows:
	$$\xygraph{!{(1,-1)}*+!(-.3,0){\bullet \sigma(0)}="k" !{(-1,1)}*+!(.3,0){\sigma(1)\bullet}="a" !{(0,1)}{\bullet\sigma(2) }="b" !{(1,1)}*+!(-.3,0){\bullet \sigma(3)}="c" !{(2,0)}*+!(-.3,0){\bullet\sigma(4)}="d" !{(1,0)}{\bullet\sigma(5)}="e" !{(0,0)}{\bullet\sigma(6)}="f" !{(-1,0)}{\bullet\sigma(7)}="g" !{(-2,0)}*+!(.3,0){ \sigma(8)\bullet}="h" !{(-1,-1)}*+!(.3,0){\sigma(9)\bullet}="i" !{(0,-1)}{\bullet\sigma(10)}="j" "a"-"b" "b"-"c" "c"-"d" "d"-"e" "e"-"f" "f"-"g" "g"-"h" "h"-"i" "i"-"j" "j"-"k" "a"-@{--}"h" "d"-@{--}"k"}.$$
	
	If at least two of the three paths has at least one vertex in between, we obtain the graph $12$ times, since we can reverse the direction and we can interchange the three paths. Otherwise, there are line bundles in the intersection product of the graph occurring twice or three times. Hence, after summing over all permutations $\tau\in\mathcal{S}_{r+1}$, we can uniformly write
	\begin{align*}
	&\frac{1}{12}\sum_{\tau\in\mathcal{S}_{r+1}}\sum_{B'\in\pi}\left(\prod_{B\in \pi\setminus\lbrace B'\rbrace}\frac{1}{2|B|}\sum_{\sigma\colon \mathbb{Z}/|B|\xrightarrow{\sim}B}\prod_{j=0}^{|B|-1}t_{\tau(\sigma(j)),\sigma(j),\sigma(j+1)}\right)\times\\
	&\sum_{1\le j<k\le |B'|}\sum_{\sigma\colon \mathbb{Z}/|B'|\xrightarrow{\sim} B'}t_{\tau(\sigma(0)),\sigma(0),\sigma(j)}t_{\tau(r+1),\sigma(1),\sigma(k)}\prod_{j=1}^{|B'|-1}t_{\tau(\sigma(j)),\sigma(j),\sigma(j+1)}
	\end{align*}
	for the coefficient corresponding to the partition $\pi$ in the expansion of the intersection number $\langle\overline{\mathcal{M}}_1,\dots,\overline{\mathcal{M}}_{r+1}\rangle$.
	Multiplying with the intersection number of the graph and summing over all partitions yields:
	\begin{align}\label{part3}
	\frac{c_3}{12} \left(\frac{2g+1}{2g-2}\hat{\omega}^2-\varphi(X)+6(g-1)h_{NT}\left(x_\alpha\right)\right),
	\end{align}
	with $c_3$ as in Theorem \ref{mainthm}.
\end{enumerate}
Now $\langle \overline{\mathcal{M}}_1,\dots,\overline{\mathcal{M}}_{r+1}\rangle$ is given by the sum of (\ref{part1}), (\ref{part2}) and (\ref{part3}). This proves Theorem \ref{mainthm}.
\section{Proof of Theorem \ref{neron-tate-height}}\label{secnerontate}
In this section we will prove Theorem \ref{neron-tate-height} by computing the intersection numbers $\langle (f_{m,\alpha}^*\mathcal{L})^r\rangle$ and $\langle (f_{m,\alpha}^*\hat{\mathcal{L}})^{r+1}\rangle$ as a special case of Theorem \ref{mainthm}. Let us first express $f_{m,\alpha}^*\hat{\mathcal{L}}$ by the integrable line bundles $\hat{\Delta}^{\alpha}_{jk}$.
\begin{Lem}\label{lemmapullback}
	It holds $f_{m,\alpha}^*\hat{\mathcal{L}}=-\frac{1}{2}\sum_{j,k=1}^r m_j m_k \hat{\Delta}^{\alpha}_{jk}$.
\end{Lem}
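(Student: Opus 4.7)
The plan is to decompose $f_{m,\alpha}$ into a sum of Abel--Jacobi-type maps and to pull back $\hat{\mathcal{L}}$ using the theorem of the cube together with the isometries for $\iota^*\hat{\mathcal{L}}^{\otimes 2}$ and $f_{(1,-1),\alpha}^*\hat{\mathcal{L}}^{\otimes 2}$ recorded in Section~\ref{secadmissible}. Write $\iota=f_{(1),\alpha}\colon X\to J$, $x\mapsto x-\alpha$; then $f_{m,\alpha}=\sum_{j=1}^r m_j(\iota\circ p_j)$ as a sum of morphisms into the abelian variety $J$, and the task is to pull back $\hat{\mathcal{L}}$ along such a sum.

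Since $\mathcal{L}$ is symmetric and rigidified at the origin, the theorem of the cube yields, for any morphisms $g_1,\dots,g_r\colon X^r\to J$ and any integers $m_1,\dots,m_r$, the identity
\begin{align*}
\Bigl(\sum_{j=1}^r m_j g_j\Bigr)^{\!*}\hat{\mathcal{L}}=\sum_{j=1}^r m_j^2\, g_j^*\hat{\mathcal{L}}+\sum_{1\le j<k\le r} m_jm_k\bigl((g_j+g_k)^*\hat{\mathcal{L}}-g_j^*\hat{\mathcal{L}}-g_k^*\hat{\mathcal{L}}\bigr)
\end{align*}
in $\widehat{\mathrm{Pic}}(X^r)_\mathbb{Q}$. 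As a relation of underlying line bundles this is classical, following from the bilinearity of Mumford's $\Psi$-pairing and from $[n]^*\mathcal{L}\cong\mathcal{L}^{\otimes n^2}$; that it lifts to an isometric identity of integrable adelic line bundles is a consequence of the admissible metric on $\hat{\mathcal{L}}$ being characterized by the requirement that the cube and square isomorphisms become isometries.

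I would then apply this identity with $g_j=\iota\circ p_j$. The self terms are computed directly from the defining isometry $\iota^*\hat{\mathcal{L}}^{\otimes 2}\cong \hat\omega+2\hat\alpha$, giving $(\iota p_j)^*\hat{\mathcal{L}}=\tfrac12 p_j^*\hat\omega+p_j^*\hat\alpha$ in $\widehat{\mathrm{Pic}}(X^r)_\mathbb{Q}$. For the cross terms, $\iota p_j+\iota p_k=f_{(1,1),\alpha}\circ p_{jk}$, so $(\iota p_j+\iota p_k)^*\hat{\mathcal{L}}=p_{jk}^*f_{(1,1),\alpha}^*\hat{\mathcal{L}}$. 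The pullback $f_{(1,1),\alpha}^*\hat{\mathcal{L}}$ is obtained by combining the theorem-of-the-square identity
\[
f_{(1,1),\alpha}^*\hat{\mathcal{L}}+f_{(1,-1),\alpha}^*\hat{\mathcal{L}}\cong 2p_1^*\iota^*\hat{\mathcal{L}}+2p_2^*\iota^*\hat{\mathcal{L}}
\]
with the isometry $2f_{(1,-1),\alpha}^*\hat{\mathcal{L}}\cong 2\hat\Delta+p_1^*\hat\omega+p_2^*\hat\omega$, so that a direct computation gives
\[
(\iota p_j+\iota p_k)^*\hat{\mathcal{L}}-(\iota p_j)^*\hat{\mathcal{L}}-(\iota p_k)^*\hat{\mathcal{L}}=p_j^*\hat\alpha+p_k^*\hat\alpha-p_{jk}^*\hat\Delta.
\]

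Finally, substituting these expressions into the cube expansion and re-expressing the result in the basis $\{\hat\Delta^\alpha_{jk}\}$ via $\hat\Delta^\alpha_{jj}=-p_j^*\hat\omega-2p_j^*\hat\alpha+\pi_r^*\hat\alpha^2$ and $\hat\Delta^\alpha_{jk}=p_{jk}^*\hat\Delta-p_j^*\hat\alpha-p_k^*\hat\alpha+\pi_r^*\hat\alpha^2$ for $j\neq k$ yields the desired formula after elementary bookkeeping. The delicate step, which is the main obstacle, is precisely this last one: the constant-along-the-fibres correction terms $\pi_r^*\hat\alpha^2$ coming from the definition of $\hat\omega^\alpha$ and $\hat\Delta^\alpha$ in (\ref{modification}) are arranged exactly so that the diagonal and off-diagonal pieces consolidate into $-\tfrac12\sum_{j,k=1}^r m_jm_k\hat\Delta^\alpha_{jk}$. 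Thus the content of the lemma is really that the normalization in (\ref{modification}) is the correct one to absorb the $\pi_r^*\hat\alpha^2$-contributions produced by the theorem of the cube.
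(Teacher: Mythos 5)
Your route is genuinely different from the paper's: the paper simply quotes de Jong's pullback formula \cite[(6.1)]{dJo18},
\begin{equation*}
f_{m,\alpha}^*\hat{\mathcal{L}}=\tfrac{1}{2}\sum_{j=1}^r m_j^2p_j^*\hat{\omega}-\sum_{j<k}m_jm_kp_{jk}^*\hat{\Delta}+d\sum_{j=1}^rm_jp_j^*\hat{\alpha}-\tfrac{d^2}{2}\pi_r^*\hat{\alpha}^2,\qquad d=\sum_{j=1}^rm_j,
\end{equation*}
and observes that this equals $-\tfrac12\sum_{j,k}m_jm_k\hat\Delta^{\alpha}_{jk}$ by the definitions in (\ref{modification}), whereas you rederive the pullback from the theorem of the cube. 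The strategy is sound, but your execution has a concrete gap, located exactly at the step you defer as ``elementary bookkeeping''. Your two inputs, $(\iota p_j)^*\hat{\mathcal{L}}=\tfrac12 p_j^*\hat\omega+p_j^*\hat\alpha$ and $\Lambda(g_j,g_k):=(g_j+g_k)^*\hat{\mathcal{L}}-g_j^*\hat{\mathcal{L}}-g_k^*\hat{\mathcal{L}}=p_j^*\hat\alpha+p_k^*\hat\alpha-p_{jk}^*\hat\Delta$, contain no $\pi_r^*\hat\alpha^2$ term, so neither does the cube expansion built from them; but the coefficient of $\pi_r^*\hat\alpha^2$ in the asserted answer $-\tfrac12\sum_{j,k}m_jm_k\hat\Delta^{\alpha}_{jk}$ is $-\tfrac12\sum_jm_j^2-\sum_{j<k}m_jm_k=-\tfrac{d^2}{2}$. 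Hence your inputs and your conclusion differ by $\tfrac{d^2}{2}\pi_r^*\hat\alpha^2$, which is nonzero whenever $d\neq0$ and $\hat\alpha^2\neq0$; no bookkeeping can close this, so at least one of your two intermediate isometries must be wrong as stated.

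In fact both are off by constants: with the normalization of the admissible metrics actually used throughout the paper (de Jong's, under which $\hat\alpha^2$ is in general nonzero), one has $2\iota^*\hat{\mathcal{L}}=\hat\omega+2\hat\alpha-\pi^*\hat\alpha^2=\hat\omega^{\alpha}$ and $\Lambda(g_j,g_k)=-\hat\Delta^{\alpha}_{jk}$ for $j\neq k$. The constants come, for instance, from restricting the isometry $2\hat\Delta+p_1^*\hat\omega+p_2^*\hat\omega\cong 2f_{(1,-1),\alpha}^*\hat{\mathcal{L}}$ to $X\times\lbrace A\rbrace$ for $\alpha=\mathcal{O}(A)$ and applying the admissible adjunction formula; verifying them is precisely the content of \cite[(6.1)]{dJo18}. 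Once you have them, your argument becomes very clean: since also $\Lambda(g_j,g_j)=2g_j^*\hat{\mathcal{L}}=p_j^*\hat\omega^{\alpha}=-\hat\Delta^{\alpha}_{jj}$ by $[2]^*\hat{\mathcal{L}}\cong\hat{\mathcal{L}}^{\otimes 4}$, the cube expansion reads $f_{m,\alpha}^*\hat{\mathcal{L}}=\tfrac12\sum_{j,k=1}^rm_jm_k\Lambda(g_j,g_k)=-\tfrac12\sum_{j,k=1}^rm_jm_k\hat\Delta^{\alpha}_{jk}$ with no rearrangement at all. So the missing piece is exactly the determination of the $\pi_r^*\hat\alpha^2$-constants, which you asserted rather than proved.
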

\begin{proof}
	By \cite[Equation (6.1)]{dJo18} we have
	$$f_{m,\alpha}^*\hat{\mathcal{L}}=\frac{1}{2}\sum_{j=1}^r m_j^2p_j^*\hat{\omega}-\sum^r_{j<k}m_jm_kp_{jk}^*\hat{\Delta}+d\sum_{j=1}^rm_jp_j^*\hat{\alpha}-\frac{d^2}{2}\pi^*\hat{\alpha}^2$$
	with $d=\sum_{j=1}^r m_j$.
	The assertion directly follows by the definition of $\hat{\Delta}_{jk}^{\alpha}$.
	 \end{proof}
For any $x\in\mathbb{R}$ and $n\in \mathbb{Z}$ we write $(x)_n=\prod_{k=0}^{n-1}(x-k)$ for the $n$-th falling factorial of $x$. Note, that for $n\le 0$ we get the empty product $(x)_n=1$.
Theorem \ref{neron-tate-height} follows directly from the following special case of Theorem \ref{mainthm} and the projection formula (\ref{projectionformula}).
\begin{Thm}\label{intersectionpullback}
	We have $\langle (f_{m,\alpha}^*\mathcal{L})^r\rangle=r!(g)_{r} \prod_{j=1}^r m_j^2$. Further, it holds
	$$\langle (f_{m,\alpha}^*\hat{\mathcal{L}})^{r+1}\rangle=\frac{(r+1)!\prod_{j=1}^r m_j^2}{24}\left(a'\hat{\omega}^2+b'\varphi(X)+c'h_{NT}(x_\alpha)\right)$$
	with
	$$a'=\frac{3g(g-2)_{r-1}}{g-1}\sum_{j=1}^r m_j^2-\frac{2(2g+1)(g-3)_{r-2}}{g-1}\sum^r_{j< k}m_j m_k,$$
	$$b'=4(g-3)_{r-2}\sum_{j< k}^rm_j m_k\quad\text{and}\quad c'=12(g-1)_r\left(\sum_{j=1}^rm_j\right)^2.$$
\end{Thm}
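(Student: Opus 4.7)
The strategy is to specialize Theorem \ref{mainthm} to the tensor dictated by Lemma \ref{lemmapullback}. Since $f_{m,\alpha}^*\hat{\mathcal{L}} = -\tfrac{1}{2}\sum_{j,k} m_jm_k \hat{\Delta}^\alpha_{jk}$, we take $t_{l,j,k} = -m_j m_k$, which is independent of $l$ and symmetric in $j,k$. Three simplifications follow at once: the sum over $\tau$ collapses to the factor $r!$ (resp.\ $(r+1)!$); the symmetry and $l$-independence of $t$ force $c_2 = c_3$; and along any cyclic block
$$\prod_{j=0}^{|B|-1} t_{\sigma(j),\sigma(j+1)} = (-1)^{|B|} \prod_{i\in B} m_i^2,$$
because each element of $B$ appears exactly twice in the cyclic product, so this value is $\sigma$-independent.

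For the first assertion, substitution into Theorem \ref{mainthm}(a) yields
$$\langle (f_{m,\alpha}^*\mathcal{L})^r\rangle = r!(-1)^r \prod_{j=1}^r m_j^2 \cdot \sum_{\pi \in \Pi_r} (-g)^{|\pi|} \prod_B (|B|-1)!.$$
The inner sum is the standard generating identity $\sum_{\pi\in\Pi_r} z^{|\pi|}\prod_B(|B|-1)! = z(z+1)\cdots(z+r-1)$ for unsigned Stirling numbers of the first kind, which at $z=-g$ equals $(-1)^r(g)_r$; this proves the first formula.

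For the second assertion, compute the contribution of the distinguished block $B'$ in $c_1$ and $c_3$. The cyclic product again telescopes to $(-1)^{|B'|+1}\prod_{i\in B'}m_i^2$, multiplied by an extra factor $m_k^2$ (for $c_1$) or $m_{\sigma(j)}m_{\sigma(k)}$ (for $c_3$). Summing over $\sigma$ and the auxiliary indices gives $|B'|!\prod_{i\in B'}m_i^2\sum_{k\in B'}m_k^2$ and $\tfrac{|B'|!}{2}\prod_{i\in B'}m_i^2\sum_{u\ne v\in B'}m_um_v$ respectively (each times $(-1)^{|B'|+1}$). Combining with the other-block contributions (as in part (a)), invoking the marked-block identity $\sum_{\pi: B\in\pi} z^{|\pi|-1}\prod_{B''}(|B''|-1)! = (|B|-1)!\cdot z(z+1)\cdots(z+r-|B|-1)$, grouping by $|B|=b$, and using the elementary counts $\sum_{|B|=b}\sum_{k\in B} m_k^2 = \binom{r-1}{b-1}\sum_k m_k^2$ and $\sum_{|B|=b}\sum_{u\ne v\in B} m_um_v = \binom{r-2}{b-2}\sum_{u\ne v}m_um_v$ reduces the evaluation of $c_1$ and $c_3$ to the two binomial-rising-factorial identities
\begin{align*}
\sum_{b=1}^r b!\binom{r-1}{b-1}z(z+1)\cdots(z+r-b-1) &= (z+2)(z+3)\cdots(z+r),\\
\sum_{b=2}^r b!\binom{r-2}{b-2}z(z+1)\cdots(z+r-b-1) &= 2(z+3)(z+4)\cdots(z+r),
\end{align*}
which are easily proved by induction on $r$. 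Specialized at $z=-g$ they yield
$$c_1 = (r+1)!(g-2)_{r-1}\prod_j m_j^2\sum_k m_k^2, \qquad c_3 = -(r+1)!(g-3)_{r-2}\prod_j m_j^2\sum_{u\ne v}m_um_v.$$

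Finally, substitute $c_1$ and $c_2=c_3$ into Theorem \ref{mainthm}(b). The $\hat{\omega}^2$ and $\varphi(X)$ coefficients immediately match $a'$ and $b'$. For the $h_{NT}$ coefficient, the telescoping relations $(g-2)(g-3)_{r-2}=(g-2)_{r-1}$ and $(g-1)(g-2)_{r-1}=(g-1)_r$, together with $\sum_k m_k^2+\sum_{u\ne v}m_um_v=(\sum_k m_k)^2$, collapse $c_1-(g-2)c_3$ to the claimed form of $c'$. The main obstacle is isolating and verifying the two binomial-rising-factorial identities above; everything else is direct bookkeeping and sign tracking.
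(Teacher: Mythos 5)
Your proof is correct, and it takes a genuinely different route from the paper's. You evaluate the combinatorial sums of Theorem \ref{mainthm} in closed form: after noting that $t_{l,j,k}=-m_jm_k$ makes every cyclic block product collapse to $(-1)^{|B|}\prod_{i\in B}m_i^2$, you reduce $\langle(f_{m,\alpha}^*\mathcal{L})^r\rangle$, $c_1$ and $c_3$ to the permutation--cycle identity $\sum_{\pi\in\Pi_r}z^{|\pi|}\prod_B(|B|-1)!=z(z+1)\cdots(z+r-1)$ and its two marked-block variants; I have checked your block-by-block bookkeeping (including the factors $|B'|!$ versus $\tfrac{|B'|!}{2}$ and the uniform sign $(-1)^{|B'|+1}$) and verified your two binomial--rising-factorial identities for small $r$, and they do follow by induction via Pascal's rule. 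The paper instead avoids any closed-form summation: it regards $\langle(f_{m,\alpha}^*\mathcal{L})^r\rangle$, $c_1$ and $c_3$ as polynomials in $g$ of known degree, pins down their roots by geometric vanishing arguments --- the dimension vanishing (\ref{vanishingbydimension}) for small $g$ (after noting that (\ref{selfproduct_of_thetabundle}) also holds for $g=1$), the vanishing of $h'_{\mathcal{L}}(J)$ for $g=r$, and the specialization to hyperelliptic curves with $\alpha=\tfrac{\omega}{2g-2}$, where $\tfrac{2g+1}{2g-2}\hat{\omega}^2=\varphi$ and $h_{NT}(x_\alpha)=0$, to isolate first $c_1$ and then $c_3$ --- and computes only the leading coefficients from the singleton partition (respectively the partitions $\pi_{jk}$). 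Your argument is purely combinatorial and self-contained at the price of two extra identities to verify; the paper's trades that combinatorics for geometric input. Both yield the same $c_1$ and $c_2=c_3$, and your final substitution into Theorem \ref{mainthm}(b), using $(g-2)(g-3)_{r-2}=(g-2)_{r-1}$, $(g-1)(g-2)_{r-1}=(g-1)_r$ and $\sum_k m_k^2+\sum_{u\neq v}m_um_v=\bigl(\sum_k m_k\bigr)^2$, reproduces $a'$, $b'$ and $c'$ exactly.
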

\begin{proof}
	By Theorem \ref{mainthm}, we have
	\begin{align}\label{selfproduct_of_thetabundle}
	\langle (f_{m,\alpha}^*\mathcal{L})^r\rangle=(-1)^rr!\sum_{\pi\in\Pi_r}(-g)^{|\pi|}\prod_{B\in\pi}\frac{1}{|B|}\sum_{\sigma\colon\mathbb{Z}/|B|\xrightarrow{\sim}B}\prod_{j=0}^{|B|-1}m_{\sigma(j)} m_{\sigma(j+1)}.
	\end{align}
	This is a polynomial in $g$ of degree $r$. Note, that (\ref{selfproduct_of_thetabundle}) also holds for $g=1$, since the assumption $g\ge 2$ was only needed for the proof of part (b) in Theorem \ref{mainthm}. Hence, we have $\langle (f_{m,\alpha}^*\mathcal{L})^r\rangle=0$ for $1\le g\le r-1$ by (\ref{vanishingbydimension}), such that the polynomial vanishes for $1\le g\le r-1$. Since the polynomial is also divisible by $g$, it has to be a multiple of $(g)_r$. Considering the summand for $\pi=\lbrace\lbrace1\rbrace,\lbrace 2\rbrace, \dots\lbrace r\rbrace\rbrace$, we obtain that its leading coefficient is $r!\prod_{j=1}^r m_j^2$. Therefore, we get $\langle (f_{m,\alpha}^*\mathcal{L})^r\rangle=r!(g)_{r} \prod_{j=1}^r m_j^2$ as desired.
	
	Now we consider $\langle (f_{m,\alpha}^*\hat{\mathcal{L}})^{r+1}\rangle$.
	Let us compute the numbers $c_1, c_2$ and $c_3$ in Theorem \ref{mainthm} in this particular case.
	First, we obtain that $(-1)^{r+1}c_1$ is equal to
	\begin{align*}
	(r+1)!\sum_{\pi\in \Pi_{r}}(-g)^{|\pi|-1}\sum_{\gf{B'\in\pi}{k\in B'}}|B'|m_k^2\prod_{B\in\pi}\frac{1}{|B|}\sum_{\sigma\colon \mathbb{Z}/|B|\xrightarrow{\sim}B}\prod_{j=0}^{|B|-1}m_{\sigma(j)}m_{\sigma(j+1)}.
	\end{align*}
	We consider $c_1$ as a polynomial in $g$ of degree $r-1$.
	To compute this polynomial we may assume, that $X$ is hyperelliptic and $\alpha=\frac{\omega}{2g-2}$. Then we have $\frac{2g+1}{2g-2}\hat{\omega}^2=\varphi$ and $h_{NT}(x_\alpha)=0$. Hence, $\langle (f_{m,\alpha})^*\hat{\mathcal{L}}^{r+1}\rangle=\frac{gc_1}{8(g-1)}\hat{\omega}^2$. But $\langle (f_{m,\alpha}^*\hat{\mathcal{L}})^{r+1}\rangle$ vanishes for $2\le g\le r-1$ by (\ref{vanishingbydimension}). For $g=r$ we also have $\langle (f_{m,\alpha}^*\hat{\mathcal{L}})^{r+1}\rangle=\langle \hat{\mathcal{L}}^{r+1}\rangle=0$, as the N\'eron--Tate height $h'_{\mathcal{L}}(J)$ vanishes.
	Thus, the polynomial is a multiple of $(g-2)_{r-1}$.
	Considering the summand for $\pi=\lbrace\lbrace1\rbrace,\lbrace 2\rbrace, \dots\lbrace r\rbrace\rbrace$, we obtain that the leading coefficient of $c_1$ is $(r+1)!\sum_{k=1}^rm_k^2\prod_{j=1}^rm_j^2$. Therefore, we get
	$$c_1=(r+1)!(g-2)_{r-1}\sum_{k=1}^rm_k^2\prod_{j=1}^rm_j^2.$$
	
	Further, $(-1)^{r+1}c_3$ is given by
	\begin{align*}
	(r+1)!\sum_{\pi\in \Pi_{r}}(-g)^{|\pi|-1}\sum_{B'\in \pi}\sum_{\gf{j,k\in B'}{j<k}}|B'|m_jm_k\prod_{B\in\pi}\frac{1}{|B|}\sum_{\sigma\colon \mathbb{Z}/|B|\xrightarrow{\sim}B}\prod_{j=0}^{|B|-1}m_{\sigma(j)}m_{\sigma(j+1)}.
	\end{align*}
	Since the summand for $\pi=\lbrace\lbrace1\rbrace,\lbrace 2\rbrace, \dots\lbrace r\rbrace\rbrace$ vanishes, $c_3$ is a polynomial in $g$ of degree $r-2$. We again assume $\alpha=\frac{\omega}{2g-2}$. If we assume $g\ge 3$, we can choose $X$, such that $\frac{2g+1}{2g-2}\hat{\omega}^2\neq\varphi$. There always exists such a curve, as $\frac{2g+1}{2g-2}\hat{\omega}^2-\varphi$ satisfies a Northcott property on every projective subvariety of the coarse moduli space of curves $\mathcal{M}_g$ by \cite[Theorem 1.3.5]{Zha10} and $\mathcal{M}_g$ contains projective curves for every $g\ge 3$, see for example \cite[Lecture 1, Section 3.]{Har87}.
	Then we have
	$$\langle (f_{m,\alpha}^*\hat{\mathcal{L}})^{r+1}\rangle=\frac{gc_1}{8(g-1)}\hat{\omega}^2+\frac{c_3}{12}\left(\frac{2g+1}{2g-2}\hat{\omega}^2- \varphi(X)\right).$$
	As $\langle (f_{m,\alpha}^*\hat{\mathcal{L}})^{r+1}\rangle$ and $c_1$ vanish for $3\le g\le r$, also $c_3$ has to vanish for $3\le g\le r$. Hence, $c_3$ is a multiple of $(g-3)_{r-2}$. Considering the summands of $c_3$ for 
	$$\pi_{jk}=\lbrace\lbrace i\rbrace~|~i\notin\lbrace j,k\rbrace\rbrace\cup\lbrace\lbrace j,k\rbrace\rbrace$$
	for all pairs $1\le j<k\le r$, we obtain $-2(r+1)!\sum_{j<k}^rm_jm_k\prod_{j=1}^r m_j^2$ for the leading coefficient of $c_3$. Hence, we obtain
	$$c_3=-2(r+1)!(g-3)_{r-2}\sum_{j<k}^rm_jm_k\prod_{j=1}^r m_j^2.$$
	It can be directly checked, that we have $c_2=c_3$ in this particular situation. Now the theorem follows by putting the values for $c_1$, $c_2$ and $c_3$ into the formula in Theorem \ref{mainthm}.
	 \end{proof}
\section{Lower bounds for the self-intersection number}
We prove Theorem \ref{lowerbound} by applying the arithmetic Hodge index theorem for adelic line bundles by Yuan--Zhang \cite[Theorem~3.2]{YZ17} to Theorem \ref{mainthm}. 
For any $t_{jk}\in \mathbb{Q}$ for $1\le j,k\le r$ we define on $X^r$ the adelic line bundle $\overline{\mathcal{M}}=\sum_{j,k=1}^r m_j m_k t_{jk}\hat{\Delta}_{jk}^{\alpha}$. Our goal is to apply the arithmetic Hodge index theorem to obtain the non-positivity of $\langle (f^*_{m,\alpha}\hat{\mathcal{L}})^{r-1},\overline{\mathcal{M}}^2\rangle$. Hence, we have to check, whether the assumptions of the theorem are satisfied.
We recall from \cite[Theorem~3.2]{YZ17}, that $\langle (f^*_{m,\alpha}\hat{\mathcal{L}})^{r-1},\overline{\mathcal{M}}^2\rangle\le 0$ if
\begin{itemize}
	\item $f^*_{m,\alpha}\hat{\mathcal{L}}$ is a nef adelic line bundle,
	\item $f^*_{m,\alpha}\mathcal{L}$ is big and
	\item $\langle (f^*_{m,\alpha}\mathcal{L})^{r-1},\mathcal{M}\rangle=0$.
\end{itemize}

As $\hat{\mathcal{L}}$ is a nef adelic line bundle by Lemma \ref{lemnef}, $f_{m,\alpha}^*\hat{\mathcal{L}}$ is nef, too. 
Since $f^*_{m,\alpha}\mathcal{L}$ is nef, it is big if and only if $\mathrm{vol}(f^*_{m,\alpha}\mathcal{L})=\langle (f^*_{m,\alpha}\mathcal{L})^r\rangle>0$. By Theorem \ref{intersectionpullback}, we have $\langle (f^*_{m,\alpha}\mathcal{L})^r\rangle=r!(g)_r\prod_{j=1}^rm_j^2$, which is positive, if and only if $r\le g$.
For the third assumption we give the following lemma.
\begin{Lem}\label{vanishingintersection}
	It holds
	$$\langle (f^*_{m,\alpha}\mathcal{L})^{r-1},\mathcal{M}\rangle=-2(r-1)!(g)_{r-1}\prod_{j=1}^r m_j^2\left(g\sum_{j=1}^r t_{jj}-\sum_{j\neq k}^r t_{jk}\right).$$
\end{Lem}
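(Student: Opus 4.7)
The plan is to apply Theorem \ref{mainthm}(a) directly. By Lemma \ref{lemmapullback}, the bundle $f^*_{m,\alpha}\hat{\mathcal{L}}$ fits the framework of Theorem \ref{mainthm} with coefficients $t_{l,j,k}=-m_jm_k$ for $l=1,\ldots,r-1$, and after symmetrizing the $t_{jk}$ (which is harmless since $\hat{\Delta}^{\alpha}_{jk}$ is symmetric in $j,k$) the bundle $\overline{\mathcal{M}}$ corresponds to $t_{r,j,k}=2m_jm_kt_{jk}$. Substituting these into the formula of Theorem \ref{mainthm}(a) yields an explicit polynomial expression $P(g)$ in $g$ of degree at most $r$. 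Each summand carries a factor $(-g)^{|\pi|}$ with $|\pi|\ge 1$, so $g$ divides $P(g)$.

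I next mimic the polynomial argument used in the proof of Theorem \ref{intersectionpullback}. The intersection number $\langle(f^*_{m,\alpha}\mathcal{L})^{r-1},\mathcal{M}\rangle$ is a geometric intersection, and since $f^*_{m,\alpha}\mathcal{L}$ is pulled back from the $g$-dimensional variety $J$, the classical vanishing in (\ref{vanishingbydimension}) forces $P(g)=0$ whenever $r-1>g$. Hence $P(g)$ vanishes at $g=1,2,\ldots,r-2$, and combined with divisibility by $g$ we conclude that $(g)_{r-1}=g(g-1)\cdots(g-r+2)$ divides $P(g)$. Since $\deg P\le r$, we can write $P(g)=(g)_{r-1}(ag+b)$ for scalars $a,b$ depending linearly on the $(t_{jk})$.

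To identify $a$ I extract the coefficient of $g^r$ in $P(g)$. Only the discrete partition $\pi=\{\{1\},\ldots,\{r\}\}$ contributes to this degree, and each block is a loop. Using that $t_{l,j,k}$ depends on $l$ only through whether $l=r$, the sum over $\tau\in\mathcal{S}_r$ collapses to $(r-1)!$ times a sum over the index $s=\tau^{-1}(r)$; the block $\{s\}$ contributes $t_{r,s,s}=2m_s^2t_{ss}$ and each other singleton $\{j\}$ contributes $-m_j^2$. This gives $a=-2(r-1)!\prod_jm_j^2\sum_jt_{jj}$, matching the claimed leading coefficient.

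For $b$ I extract the coefficient of $g^{r-1}$ in $P(g)$, using contributions from (i) the discrete partition (through the subleading term of $(g)_{r-1}(ag+b)$) and (ii) partitions with $|\pi|=r-1$, i.e.\ exactly one block of size $2$ and all others singletons. For a size-$2$ block not containing $s$, the cycle contribution is $m_j^2m_k^2$; for a size-$2$ block $\{s,j\}$ containing $s$, the special edge yields $2m_s^2m_j^2t_{sj}$. After summing these contributions and subtracting the $-\binom{r-1}{2}a$ term coming from expanding $(g)_{r-1}$, one obtains $b=2(r-1)!\prod_jm_j^2\sum_{j\ne k}t_{jk}$, and assembling $P(g)=(g)_{r-1}(ag+b)$ recovers the claimed identity. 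The main obstacle is the careful bookkeeping in computing $b$: one must correctly handle the two sub-cases (whether or not the size-$2$ block contains $s$) and verify that the combinatorial factors match, but once the polynomial framework is in place this is routine.
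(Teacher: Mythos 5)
Your argument is correct, and it takes a genuinely different route from the paper. The paper proves the lemma by pure geometry: it expands $\mathcal{M}$ into the individual bundles $\Delta^{\alpha}_{jk}$ and computes each $\langle (f^*_{m,\alpha}\mathcal{L})^{r-1},\Delta^{\alpha}_{jk}\rangle$ by restricting to the partial diagonal via the map $s_{jk}$ (using $f_{m,\alpha}\circ s_{jk}=f_{m(j,k),\alpha}$) and by projecting along $p^j$ for the $p_j^*\alpha$ and loop terms, each time falling back on the already established identity $\langle (f^*_{m',\alpha}\mathcal{L})^{r-1}\rangle=(r-1)!(g)_{r-1}\prod_l m_l'^2$ of Theorem \ref{intersectionpullback}. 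You instead feed the mixed product directly into the combinatorial formula of Theorem \ref{mainthm}(a) and run the interpolation-in-$g$ argument (divisibility by $g$, vanishing for $1\le g\le r-2$ by (\ref{vanishingbydimension}), hence $P(g)=(g)_{r-1}(ag+b)$, then read off $a$ and $b$ from the top two coefficients); this is exactly the trick the paper uses to prove Theorem \ref{intersectionpullback} itself, extended one degree further. Your approach buys self-containedness relative to Theorem \ref{mainthm}(a) and avoids the diagonal-restriction computations, at the price of heavier bookkeeping for the $g^{r-1}$ coefficient; the paper's approach yields the finer data of each elementary intersection number $\langle (f^*_{m,\alpha}\mathcal{L})^{r-1},\Delta^{\alpha}_{jk}\rangle$, which it reuses verbatim in the proof of Theorem \ref{generallowerbound} for the bundle $\overline{\mathcal{A}}$. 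I checked your bookkeeping: for a $2$-block $\{s,j\}$ containing the special index the edge product is actually $t_{r,s,j}\,t_{\tau(j),j,s}=-2m_s^2m_j^2t_{sj}$ (you dropped the sign of the second edge), but after tracking the signs $(-1)^{r-2}$ from the remaining singletons and $(-1)^{r-1}$ from $(-g)^{r-1}$, the coefficient of $g^{r-1}$ comes out as $(r-1)!\prod_i m_i^2\bigl(2\binom{r-1}{2}\sum_s t_{ss}+2\sum_{j\neq k}t_{jk}\bigr)$, and subtracting $-\binom{r-1}{2}a$ indeed gives $b=2(r-1)!\prod_j m_j^2\sum_{j\neq k}t_{jk}$, as you claim.
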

\begin{proof}
	Let us first denote the following vectors in $\left(\mathbb{Z}\setminus\lbrace 0\rbrace \right)^{r-1}$
	$$m(i)=(m_1,\dots,m_{i-1},m_{i+1},\dots,m_r) \text{ and}$$
	$$m(j,k)=(m_1,\dots,m_{j-1},m_j+m_k,m_{j+1},\dots,m_{k-1},m_{k+1},\dots,m_r)$$
	for $j<k$.
	To compute $\langle (f^*_{m,\alpha}\mathcal{L})^{r-1},\mathcal{M}\rangle$, we decompose
	\begin{align}\label{decomposition}
	\langle (f^*_{m,\alpha}\mathcal{L})^{r-1},\mathcal{M}\rangle=\sum_{j,k=1}^r m_j m_k t_{jk}\langle (f^*_{m,\alpha}\mathcal{L})^{r-1},\Delta_{jk}^\alpha\rangle.
	\end{align}
	Let us compute the intersection products $\langle (f^*_{m,\alpha}\mathcal{L})^{r-1},\Delta_{jk}^\alpha\rangle$. We first consider the case $j\neq k$. By symmetry we may assume $j<k$. By definition we have $\Delta^\alpha_{jk}=p_{jk}^*\Delta-p_j^*\alpha-p_k^*\alpha$. Further, we define the maps
	$$s_{jk}\colon X^{r-1}\to X^r,\quad (x_1,\dots,x_{r-1})\mapsto(x_1,\dots,x_{k-1},x_j,x_{k+1},\dots,x_{r-1}).$$
	A direct computation gives $f_{m,\alpha}\circ s_{jk}=f_{m(j,k),\alpha}$. As $p_{jk}^*\Delta=\mathcal{O}_{X^r}(s_{jk})$, we can conclude from Theorem \ref{intersectionpullback} 
	$$\langle (f^*_{m,\alpha}\mathcal{L})^{r-1},p_{jk}^*\Delta\rangle=\langle (s_{jk}^*f^*_{m,\alpha}\mathcal{L})^{r-1}\rangle=(r-1)!(g)_{r-1}(m_j+m_k)^2\prod_{\gf{l=1}{l\notin\lbrace j,k\rbrace}}^r m_l^2.$$
	As it holds $c_1(f_{m,\alpha}^*\mathcal{L})^{r-1}[p^j]=c_1(f_{m(j),\alpha}^*\mathcal{L})^{r-1}$, we obtain by Equation (\ref{delgine2}) and Theorem \ref{intersectionpullback}
	$$\langle (f^*_{m,\alpha}\mathcal{L})^{r-1},p_{j}^*\alpha\rangle=\langle (f_{m(j),\alpha}^*\mathcal{L})^{r-1}\rangle\cdot \deg\alpha=(r-1)!(g)_{r-1}\prod_{\gf{l=1}{l\neq j}}^r m_l^2.$$
	Putting this together, we get
	$$\langle (f^*_{m,\alpha}\mathcal{L})^{r-1},\Delta^{\alpha}_{jk}\rangle=2(r-1)!(g)_{r-1}m_j m_k\prod_{\gf{l=1}{l\notin\lbrace j,k\rbrace}}^r m_l^2.$$
	As above, we obtain by $c_1(f_{m,\alpha}^*\mathcal{L})^{r-1}[p^j]=c_1(f_{m(j),\alpha}^*\mathcal{L})^{r-1}$, Equation (\ref{delgine2}) and Theorem \ref{intersectionpullback}
	$$\langle (f^*_{m,\alpha}\mathcal{L})^{r-1},p_{j}^*\Delta^\alpha_{jj}\rangle=\langle (f_{m(j),\alpha}^*\mathcal{L})^{r-1}\rangle\cdot (-2g)=(-2g)(r-1)!(g)_{r-1}\prod_{\gf{l=1}{l\neq j}}^r m_l^2.$$
	Now the lemma follows by applying these computations to the decomposition in Equation (\ref{decomposition}).
	 \end{proof}
Now we can give the proof of Theorem \ref{generallowerbound}.
\begin{proof}[Proof of Theorem \ref{generallowerbound}]
	By Lemma \ref{vanishingintersection} the assumption $g\sum_{j=1}^r t_{jj}=\sum_{j\neq k}^r t_{jk}$ implies $\langle (f^*_{m,\alpha}\mathcal{L})^{r-1},\mathcal{M}\rangle=0$.
	Thus, for $r\le g$ the theorem directly follows by the arguments above. For $r\ge g+3$ the arithmetic intersection number $\langle(f_{m,\alpha}^*\hat{\mathcal{L}})^{r-1},\overline{\mathcal{M}}^2\rangle$ vanishes by (\ref{vanishingbydimension}).
	For $r=g+2$ we also obtain $\langle(f_{m,\alpha}^*\hat{\mathcal{L}})^{r-1},\overline{\mathcal{M}}^2\rangle=0$, as $\langle \hat{\mathcal{L}}^{r-1}\rangle$ is a multiple of $h'_{\mathcal{L}}(J)=0$.
	
	For $r=g+1$ we have to use an additional argument, since $f_{m,\alpha}^*\mathcal{L}$ is no longer big. Consider the adelic line bundle
	$$\overline{\mathcal{A}}=-\frac{1}{2}\sum_{j=1}^{g+1}m_j^2\hat{\Delta}^{\alpha}_{jj}=\sum_{j=1}^{g+1} pr_j^* (f_{(m_j),\alpha}^*\hat{\mathcal{L}}).$$
	This adelic line bundle is nef, since it is a positive linear combination of pullbacks of $\hat{\mathcal{L}}$. Moreover, $\mathcal{A}$ is big, because we have 
	$$\mathrm{vol}(\mathcal{A})=\langle \mathcal{A}^{g+1}\rangle=(g+1)! g^{g+1}\prod_{j=1}^{g+1} m_j^2>0$$
	by Theorem \ref{mainthm}.
	Hence, for any rational $\epsilon>0$, the adelic $\mathbb{Q}$-line bundle $(f_{m,\alpha}^*\hat{\mathcal{L}}+\epsilon\overline{\mathcal{A}})$ is nef and $(f_{m,\alpha}^*\mathcal{L}+\epsilon\mathcal{A})$ is big.
	There exists a unique rational number $c(\epsilon)$ depending continuously on $\epsilon$, such that
	$$\langle (f^*_{m,\alpha}\mathcal{L}+\epsilon\mathcal{A})^g,(\mathcal{M}+c(\epsilon)\mathcal{A})\rangle=0.$$
	Now the arithmetic Hodge index theorem \cite[Theorem~3.2]{YZ17} implies, that
	$$\langle (f^*_{m,\alpha}\hat{\mathcal{L}}+\epsilon\overline{\mathcal{A}})^{g},(\overline{\mathcal{M}}+c(\epsilon)\overline{\mathcal{A}})^2\rangle\le 0.$$
	By continuity, we also obtain $\langle (f^*_{m,\alpha}\hat{\mathcal{L}})^g,(\overline{\mathcal{M}}+c(0)\overline{\mathcal{A}})^2\rangle\le 0$, where $c(0)$ is defined as the limit $\lim_{\epsilon\to 0} c(\epsilon)$. Thus, we only have to show $c(0)=0$. For this purpose, we consider
	$$0=\langle (f^*_{m,\alpha}\mathcal{L})^g,(\mathcal{M}+c(0)\mathcal{A})\rangle=c(0)\langle (f^*_{m,\alpha}\mathcal{L})^g,\mathcal{A}\rangle.$$
	But the same argument as in the proof of Lemma \ref{vanishingintersection} gives
	$$\langle (f^*_{m,\alpha}\mathcal{L})^g,\mathcal{A}\rangle=g\sum_{j=1}^{g+1} m_j^2 \langle (f_{m(j),\alpha}^*\mathcal{L})^g\rangle=g \cdot (g+1)!\cdot g!\prod_{j=1}^{g+1} m_j^2>0$$
	and hence $c(0)=0$. This proves the theorem.
	 \end{proof}
We can now prove Theorem \ref{lowerbound} as an application of Theorem \ref{generallowerbound}.
\begin{proof}[Proof of Theorem \ref{lowerbound}]
	If we choose $r=2$, $m=(1,1)$, $\alpha=\frac{\omega}{2g-2}$ and 
	$$\overline{\mathcal{M}}=\hat{\Delta}^{\alpha}_{11}+\hat{\Delta}^{\alpha}_{22}+2g\hat{\Delta}^{\alpha}_{12},$$
	we obtain by Theorems \ref{mainthm} and \ref{generallowerbound}
	$$0\ge \langle (f_{m,\alpha}^*\hat{\mathcal{L}}),\overline{\mathcal{M}}^2\rangle=\frac{(-4g^2)(2g+1)}{g-1}\hat{\omega}^2+4g^2\varphi(X).$$
	This is exactly the first bound in the theorem.
	For the two bounds for $g=3$ and $g=4$, we choose $r=4$, $m=(1,1,1,1)$, $\alpha=\frac{\omega}{2g-2}$ and
	$$\overline{\mathcal{M}}=\hat{\Delta}^{\alpha}_{12}-\hat{\Delta}^{\alpha}_{23}+\hat{\Delta}^{\alpha}_{34}-\hat{\Delta}^{\alpha}_{14}.$$
	Again, we obtain by applying Theorems \ref{mainthm} and \ref{generallowerbound} 
	$$0\ge \langle (f_{m,\alpha}^*\hat{\mathcal{L}})^3,\overline{\mathcal{M}}^2\rangle=-4\frac{15g^3-14g^2-19g-6}{g-1}\hat{\omega}^2+8(3g^2-g-6)\varphi(X)$$
	We obtained this expression by evaluating the formula in Theorem \ref{mainthm} with the help of a computer and also by a  long computation by hand. The bounds in the theorem follow by putting in $g=3$ respectively $g=4$.
	 \end{proof}
One may ask, whether in general the choice
$$\overline{\mathcal{M}}=\sum_{j=1}^{r-1} (-1)^j m_j m_{j+1}\hat{\Delta}^{\alpha}_{j,j+1}+(-1)^rm_1m_r\hat{\Delta}^{\alpha}_{1r}$$
for $r$ even leads to good bounds for $\hat{\omega}^2$. But this turns out to be not the case. For $g\ge 5$, the bound $\hat{\omega}^2\ge\frac{g-1}{2g+1}\varphi(X)$ is always better.

\end{document}